\newcommand{\eps}{\varepsilon}
\newcommand{\opF}{\mathcal{F}}
\def\m{\medskip}
\newcommand{\opH}{\mathcal{H}}
\newcommand{\R}{\mathbb R}
\newcommand{\C}{\mathbb C}
\DeclareMathOperator{\supp}{supp}
\DeclareMathOperator{\Imm}{Im}
\DeclareMathOperator{\Rre}{Re}
\DeclareMathOperator{\sgn}{sgn}
\newcommand{\z}{\bar z}
\newcommand{\dbarb}{\bar\partial_b}
\newcommand{\dbarbs}{\bar\partial^*_b}
\newcommand{\Boxb}{\Box_b}
\newcommand{\nn}{\nonumber}
\newcommand{\ep}{\epsilon}
\newcommand{\I}{\mathcal{I}}
\newtheorem{thm}{Theorem}[section]
\newtheorem{prop}[thm]{Proposition}
\newtheorem{cor}[thm]{Corollary}
\newtheorem*{theorem*}{Theorem}
\theoremstyle{definition}
\newtheorem{defn}[thm]{Definition}
\theoremstyle{remark}
\newtheorem{rem}[thm]{Remark}
\begin{document}

\title[Fundamental Solution to $\Boxb$]{The Fundamental Solution to $\Box_b$ on Quadric Manifolds -- Part 1. General Formulas}%

\author{Albert Boggess and Andrew Raich}%
\address{
School of Mathematical and Statistical Sciences\\
Arizona State University\\
Physical Sciences Building A-Wing Rm. 216\\
901 S. Palm Walk\\
Tempe, AZ 85287-1804 }
\address{
Department of Mathematical Sciences \\ 1 University of Arkansas \\ SCEN 327 \\ Fayetteville, AR 72701}

\keywords{Quadric submanifolds, tangential Cauchy-Riemann operator, $\dbarb$, complex Green operator, Szeg\"o kernel, Szeg\"o projection, fundamental solution, Heisenberg group}
\email{boggess@asu.edu, araich@uark.edu}

\thanks{This work was supported by a grant from the Simons Foundation (707123, ASR)}%
\subjclass[2010]{32W10, 35R03, 32V20, 42B37, 43A80}

\begin{abstract} This paper is the first of a three part series in which we explore geometric and analytic properties of the Kohn Laplacian and its inverse on general quadric submanifolds of $\mathbb{C}^n\times\mathbb{C}^m$. In this paper, we present a streamlined calculation for a general integral formula for the complex Green operator $N$ and the projection onto the nullspace of $\Box_b$. The main application of our formulas is the critical case of codimension two quadrics in $\mathbb{C}^4$ where we discuss the known solvability and hypoellipticity criteria of Peloso and Ricci \cite{PeRi03}. We also provide examples to show that our formulas yield explicit calculations in some well-known cases: the Heisenberg group and a Cartesian product of Heisenberg groups.
\end{abstract}

\maketitle

%
%
\section{Introduction}\label{sec:intro}

The goal of this paper is to present an explicit integral formula for the complex Green operator and the projection onto the null space of
the Kohn Laplacian
on quadric submanifolds of $\C^n\times\C^m$. 
Our result generalizes the formula of \cite{BoRa13q} from the specific case of codimension $2$ quadrics in $\C^4$ 
to the general case of arbitrary $n$ and $m$, and we also prove a formula for the complex Green operator when
it is only a relative inverse of the Kohn Laplacian and not a full inverse. 
Additionally, the new proof is significantly simpler and uses
our calculation of the $\Boxb$-heat kernel on general quadrics \cite{BoRa11}. We then provide several applications of our
formula to the case of codimension $2$ quadrics in $\C^4$ and provide context for how this case fits into the solvability/hypoellipticity framework of
Peloso and Ricci \cite{PeRi03}.  We also provide a calculation of the complex Green operator in several instances when it is only a relative
fundamental solution: $\Boxb$ on the Heisenberg group on functions and $(0,n)$-forms, and on the Cartesian product of Heisenberg groups at the top degree. 
We conclude with a computation of the Szeg\"o projection on the Cartesian product of Heisenberg groups.

This paper is the first of a series where we explore the geometry and analysis of
the Kohn Laplacian $\Boxb$ and its (relative) inverse, the complex Green operator, on quadric submanifolds
in $\C^n\times\C^m$. 
The $\Boxb$-equation, $\Boxb u =f$, governs the behavior of boundary values of
holomorphic functions, and the $\Boxb$-operator is a naturally occurring, non-constant coefficient, non-elliptic operator. 
Solving the equation has been tantalizing mathematicians for the better part of fifty years, and while much is known for solvability/regularity in 
$L^p$ and other spaces (especially $L^2$) for hypersurface type CR manifolds, 
there has been much less work done to determine the structure of the complex Green operator, denoted by $N$, especially on
non-hypersurface type CR manifolds. 
The problem is that the techniques used to solve the equation are functional analytic in nature and therefore
non-constructive. Consequently, to have any hope of finding an explicit solution, we need additional structure on the CR manifold. 
From our perspective, the gold standard for results in this area is the calculation by Folland and Stein \cite{FoSt74p} in which
they find a beautiful, closed form expression for $N$ on the Heisenberg group. 

In the decades following \cite{FoSt74p}, mathematicians developed machinery to solve the $\Boxb$-heat equation 
(or the heat equation associated to the sub-Laplacian) on certain Lie groups. From the formulas for the heat equations, in principle, it is only a matter of
integrating the time variable out to recover the formula for $N$.
The first results for the heat equation were for the sub-Laplacian on the Heisenberg group by Hulanicki \cite{Hul76} and Gaveau \cite{Gav77}. 
More results followed in a similar vein, and the vast majority rely on the group Fourier
transform and Hermite functions \cite{PeRi03, BoRa09, BoRa11, YaZh08,CaChTi06,BeGaGr96,BeGaGr00, Eld09}. The problem with these 
techniques is that the formulas that they generate for the heat kernel 
are only given up to a partial Fourier transform that is uncomputable in practice. Consequently, any information
giving precise size estimates or asymptotics, let alone a formula in the spirit of Folland and Stein, is absent. 

A quadric submanifold $M\subset\C^n\times\C^m$ is a CR manifold of the form
\begin{equation}\label{eqn:quadric}
M= \{(z,w) \in \C^n \times \C^m: \ \Imm w = \phi(z,z) \}
\end{equation}
where $\phi: \C^n \times \C^n \mapsto \C^m $ is a sesquilinear
form (i.e., $\phi (z, z') = \overline{\phi (z',z)}$). 
The fundamental solution to $\Box_b$ or the sub-Laplacian on quadrics has been studied many authors, 
including \cite{BeGr88,BeGaGr96,BoRa09,BoRa13q,CaChTi06,FoSt74p,PeRi03}.  See also Part II of the series, \cite{BoRa20II}, in which 
we find useable sufficient conditions for a map $T$ between quadrics to be a $\Box_b$-preserving Lie group isomorphism as well as 
establish a framework for which appropriate derivatives of the complex Green operator are continuous in $L^p$ and $L^p$-Sobolev spaces (and hence are hypoelliptic). We apply the general results to codimension $2$ quadrics in $\mathbb{C}^4$.

There are two higher codimension papers that need mentioning. First, in \cite{NaRiSt01}, Nagel et. al. analyze $\Boxb$ and
its geometry in the special
class of decoupled quadrics where $\Imm w_j = \sum_{k=1}^n a_k^j|z_k|^2$. 
However, many of the interesting cases do not fall into this category.
Second, Raich and Tinker compute the 
the Szeg\"o kernel for the polynomial model
\[
M = \{(z,w)\in\C\times\C^n : \Imm w = (a_1,\dots,a_{n-1},1) p(\Rre z) \big\}
\]
where $p:\R\to\R$ is a smooth function satisfying $\lim_{|x|\to\infty}\frac{p(x)}{|x|} = \infty$, 
and the constants $a_1,\dots,a_{n-1}$ are nonzero \cite{RaTi15}. 
The authors write an explicit formula for the Szeg\"o kernel based on an integral formula of Nagel \cite{Nag86} and show that there are significant blowups off of the diagonal. Raich and Tinker evaluate all of the integrals in the case
$p(x) = x^2$. That is, the CR manifold $M$ is a quadric, but this case is very special because the tangent space (at each point) has only one complex direction, so every degree is either top or bottom. These are often the exceptional cases and can give
misleading intuition.

Associated to each quadric is the \emph{Levi form} $\phi(z,z')$ and for each $\lambda\in\R^m$, the \emph{directional Levi form in the direction
$\lambda$} is $\phi^\lambda(z,z') = \phi(z,z')\cdot\lambda$ where $\cdot$ is the usual dot product without conjugation.
For each $\lambda$, there is an $n\times n$ matrix $A^\lambda$ so that
\[
\phi^\lambda(z,z') = z^* A^\lambda z'
\]
and we identify the eigenvalues of $\phi^\lambda$ with the eigenvalues of the matrix $A^\lambda$.
Here, the $*$ designates the Hermitian transpose.

The outline of the remainder of the paper is as follows: In Section \ref{sec:results}, we record the main result for general quadrics
and provide additional context. In Section \ref{sec:preliminaries}, we discuss the CR geometry and Lie group structure of a general quadric.
In Section \ref{sec:new derivation}, we prove the main result, and we devote Section \ref{sec:examples} to explicit examples.

\subsection{Acknowledgements} The authors would like to express their sincere appreciation for the detailed review from the referee and the excellent editorial comments from Harold Boas. 

%
%
\section{Results and Discussion}\label{sec:results}
Under the projection $\pi:\C^n\times\C^m \to \C^n\times\R^m$ given by $\pi(z,t+is)=(z,t)$, we may identify a quadric $M$ with $\C^n\times\R^m$. 
The projection induces both a CR structure and Lie group structure on $\C^n\times\R^m$, and we denote this Lie group by $G$ (or $G_M$). 
Thus the projection is a CR isomorphism and we refer to the pushforwards and pullbacks of objects from $M$ to $G$ without changing the
notation.

\subsection{The Main Result -- General Formulas for the Solution of $\Box_b$ on Quadrics.} \label{subsec:genformula}

To state the main result, we need to introduce some notation.
For each $\lambda\in\R^m\setminus\{0\}$, let $\mu^\lambda_1,\dots,\mu^\lambda_n$ be the eigenvalues of $\phi^\lambda$ (or equivalently, the Hermitian symmetric matrix, $A^\lambda$)
and $v^\lambda_1, \dots, v^\lambda_n$ be an orthonormal set of eigenvectors. This means
\begin{equation}
\label{eq: ortho}
\phi^\lambda (v^\lambda_j, v^\lambda_k) = \delta_{jk} \mu_j^\lambda .
\end{equation}
Let $\alpha = \lambda/|\lambda|$, then $\mu_j^\lambda = |\lambda|\mu^\alpha_j$ and $v_j^\lambda = v_j^\alpha$.
If $z\in\C^n$ is expressed in terms of the
unit eigenvectors of $\phi^\lambda$, then $z_j^\lambda = z_j^\alpha$ is given by
\[
z^\alpha = Z(z, \alpha) = ({U^\alpha })^*\cdot z
\]
where $U^\alpha$ is the matrix whose columns are the eigenvectors, $v^\alpha_j$, $1 \leq j \leq n$,  and $\cdot $ represents matrix multiplication with $z$ written as a column vector. Note that the corresponding orthonormal basis of $(0,1)$-covectors for this basis
is 
\[
d \bar Z_j(z, \alpha), \ \ 1 \leq j \leq n, \ \ \textrm{where} \ \ 
d \bar Z(z, \alpha) =  (U^\alpha)^T \cdot d \bar z
\]
where $d \bar z$ is written as a column vector of $(0,1)$-forms and the superscript $T$ stands
for transpose. 
It is a fact that the eigenvalues, eigenvectors and hence $z^\alpha =  Z(z, \alpha) $ depend smoothly on $z \in \C^n$.
However, while the dependence of the eigenvalues is continuous (in fact Lipschitz) in $\alpha$, the eigenvectors may only be functions of bounded variation (SBV) in $\alpha$ \cite[Theorem 9.6]{Rai11}.

Let $\I_q = \{ L = (\ell_1,\dots,\ell_q): 1 \leq \ell_1 < \ell_2 <\cdots < \ell_q \leq n\}$. 
For each $K \in \I_q$, we will need to express $d \bar z^K$, in terms of $d \bar Z(z, \alpha)^L$ for $L \in \I_q$. We have
\begin{equation}
\label{C(K,L)}
d \bar z^K = \sum_{L\in\I_q} C_{K,L}(\alpha) \, d \bar Z(z, \alpha)^L
\end{equation}
where $C_{K,L} (\alpha)$ are the appropriate $q \times q$ minor determinants of $\bar U^\alpha$. 
Note that if $q =n$, then the above sum only has one term and $C_{K,K} (\alpha)= 1$. Additionally, when $q=0$, $\I_0 = \emptyset$ and the sum \eqref{C(K,L)} does not appear.

Denote by
$\nu(\lambda)=\nu(\alpha)$ the number of nonzero eigenvalues of $\phi^\lambda$. For each $q$-tuple $L \in \I_q$, set
\[
\Gamma_L = \{\alpha\in S^{m-1}: \mu^\alpha_\ell >0 \text{ for all }\ell\in L \text{ and } \mu^\alpha_\ell < 0 \text{ for all }\ell\not\in L\}
\] 
and
\[
\eps_{j,L}^\alpha = \begin{cases}\sgn(\mu_j^\lambda) & \text{if }j \in L \\
- \sgn(\mu_j^\lambda) &\text{if }j\not\in L. \end{cases}
\] 
\begin{rem} 
If $\nu = \max_{\lambda\in\R^m} \nu(\lambda)$, then $\{\lambda \in\R^m\setminus\{0\} : \nu(\lambda) = \nu\}$
is a Zariski open set and hence carries full Lebesgue measure. In particular, if
 one of the sets $\Gamma_L$ is nonempty, then $\nu=n$. When $\nu < n$, we arrange our eigenvalues so that
$\mu^\lambda_{\nu+1} = \cdots \mu^\lambda_n=0$ and write $z'=(z_1,\dots,z_\nu)$ and $z'' = (z_{\nu+1},\dots,z_n)$.
\end{rem}

\begin{defn}
Given an index in $K \in \I_q$, we say that a current 
$N_K = \sum_{K'\in\I_q} \tilde N_{K'}(z,t)\, d\z^{K'}$ is a {\em fundamental solution to $\Box_b$} 
on forms spanned by $d \bar z^K$ if $\Box_b N_K = \delta_0(z,t) \, d\bar z^K$. 
\end{defn}
$N_K$ acts on smooth forms with compact support by componentwise convolution with respect to the group structure on $G$, that is, if $f = f_0\, d\z^K$, then
$N_K*f = \sum_{K'\in\I_q}  \tilde N_{K'} * f_0\, d\z^{K'}$.
Thus if $f =f_0 d \bar z^K$ is a smooth form with compact support, then $\Box_b \{N_K * f \} = f$. In cases where $\Box_b$ has a nontrivial kernel, we let $S_K $ be the projection (Szeg\"o) operator onto this kernel and we say that $N_K$ is a {\em relative fundamental solution} if
$ \Box_b \{N_K * f \} = f-S_K(f)$ holds for all compactly supported forms spanned by $d \bar z^K$.
On quadrics, $\Boxb$ never has closed range in $L^2$ so the complex Green operator cannot be continuous in $L^2$.
As a consequence, we can only discuss \emph{a} relative inverse and not \emph{the} relative inverse. However, a relative inverse is called 
\emph{canonical} if its output is orthogonal to $\ker\Boxb$
whenever it belongs to $L^2$.

We can now state our main result.

\begin{thm}\label{thm:new derivation}
Suppose $M$ is a quadric CR submanifold of $\C^{n+m}$ given by (\ref{eqn:quadric})
with associated projection $G$. Fix $K\in\I_q$.
\begin{enumerate}[1.]
\item If $|\Gamma_L|=0$ for  all $L \in \I_q$, then the fundamental solution to $\Box_b$ on forms spanned by $d\z^K$ is given by 
\begin{align}\label{eqn:N no Szego}
N_K (z,t) &=\frac{4^n}{2(2 \pi)^{m+n}}  \sum_{L \in \I_q} 
\int_{\alpha \in S^{m-1}} C_{K,L} (\alpha) \, d \bar Z(z, \alpha)^L   \int_{r=0}^1  
\frac{1}{|\log r|^{n-\nu(\alpha)}} 
\prod_{j=1}^{\nu(\alpha)} \frac{r^{\frac 12(1-\eps_{j,L}^\alpha) |\mu_j^\alpha |}|\mu_j^\alpha |}{1- r^{|\mu_j^\alpha|}}\nn \\
& \ \ \ \  \frac{ (n+m-2)! }{  (A_\alpha(r,z)-i \alpha \cdot t)^{n+m-1}}  \frac{dr \, d \alpha}{r}
\end{align}
where
\[
A_\alpha(r,z) = \frac{2}{|\log r|}|{z''}^\alpha|^2 +  \sum_{j=1}^{\nu(\alpha)} |\mu_j^\alpha| \left( 
\frac{1+ r^{|\mu_j^\alpha|}}{1- r^{|\mu_j^\alpha|}} \right) |z_j^\alpha|^2 .
\]
\item If $|\Gamma_L|>0$ for at least one $L \in \I_q$,  then orthogonal projection onto the $\ker\Boxb$ applied to forms
spanned by $d\z^K$ is given by convolution with the 
$(0,q)$-form:
\begin{equation}\label{eqn:Szego}
S_K (z,t) = \frac{4^n(n+m-1)!}{(2\pi)^{m+n}} 
\sum_{L \in \I_q} 
\int_{\alpha \in \Gamma_L} C_{K,L} (\alpha) \, d \bar Z(z, \alpha)^L
\frac{\prod_{j=1}^n|\mu_j^\alpha|}{(\sum_{j=1}^n |\mu_j^\alpha||z_j^\alpha|^2 - i \alpha\cdot t)^{n+m}}\, d\alpha. 
\end{equation}
In the case $K = \emptyset$, $S_\emptyset(z,t)$ is the Szeg\"o kernel.
\item If $|\Gamma_L|>0$ for at least one $L \in \I_q$, then the canonical relative fundamental solution 
to $\Box_b$ given by $\int_0^\infty e^{-s\Boxb}(I-S_q) \, ds$  applied to forms spanned by $d\z^K$ is given by 
\begin{align}
&N_K (z,t) =  \nn \\
&\frac{4^n(n+m-2)!}{2(2 \pi)^{m+n}}  
\sum_{L \in \I_q} \left(
\int_{\alpha \not\in\Gamma_L} C_{K,L} (\alpha) \, d \bar Z(z, \alpha)^L 
\int_{r=0}^1  
\prod_{j=1}^n \frac{r^{\frac 12(1-\eps_{j,L}^\alpha) |\mu_j^\alpha |}|\mu_j^\alpha |}{1- r^{|\mu_j^\alpha|}}
\frac{ 1}{  (A(r,z)-i \alpha \cdot t)^{n+m-1}}  \frac{dr \, d \alpha }{r} \right. \nn \\
&+ 
\int_{\alpha \in\Gamma_L} C_{K,L} (\alpha) \, d \bar Z(z, \alpha)^L \nn \\
& \left. \ \ \ \int_{r=0}^1 
\Big[\Big(\prod_{j=1}^n \frac{|\mu_j^\alpha|}{1-r^{|\mu_j^\alpha|}}\Big) \frac{1}{(A_\alpha(r,z)-i\alpha\cdot t)^{n+m-1}}  - \frac{\prod_{j=1}^n|\mu_j^\alpha|}{(A_\alpha(0,z)-i\alpha\cdot t)^{n+m-1}}\Big] \frac{dr \, d \alpha}r \right)  \nn \\
\label{eqn:N with Szego}
\end{align}
where
\[
A_\alpha(r,z) =  \sum_{j=1}^{n} |\mu_j^\alpha| \left( 
\frac{1+ r^{|\mu_j^\alpha|}}{1- r^{|\mu_j^\alpha|}} \right) |z_j^\alpha|^2 .
\]
\end{enumerate}
In all cases, the integrals converge absolutely.
\end{thm}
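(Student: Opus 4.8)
The plan is to derive all three formulas from the $\Boxb$-heat kernel of \cite{BoRa11} by integrating the semigroup $e^{-s\Boxb}$ in the time variable $s$. On the forms spanned by $d\z^K$ one has, formally, the full fundamental solution $N_K = \int_0^\infty e^{-s\Boxb}\,ds$, the projection onto $\ker\Boxb$ as the strong limit $S_K = \lim_{s\to\infty} e^{-s\Boxb}$, and the canonical relative inverse $N_K = \int_0^\infty e^{-s\Boxb}(I-S_K)\,ds$. Which of these is finite is dictated by the large-time behavior of the kernel, and that behavior is controlled by the zero-energy ground states of $\Boxb$: such a state exists over the direction $\alpha$ precisely when $\alpha$ lies in some $\Gamma_L$. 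Thus the trichotomy of the theorem is exactly the dichotomy between ``no zero modes almost everywhere'' and ``a positive-measure set of zero modes.''

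First I would recall the structure of the heat kernel. After a partial Fourier transform in the central variable $t$ (dual variable $\lambda = \rho\,\alpha$, with $\rho = |\lambda|$ and $\alpha\in S^{m-1}$) and diagonalization of $\phi^\lambda$ through the eigenframe $v_j^\alpha$, the operator $\Boxb$ on the $d\z^K$-component splits on each fiber into $\nu(\alpha)$ scaled harmonic oscillators in the coordinates $z_j^\alpha$ together with a flat Laplacian in the degenerate directions $z''$. The form index $K$ and the signs of the $\mu_j^\alpha$ enter through the shift data $\eps_{j,L}^\alpha$, which records whether the $j$-th oscillator ground state carries zero energy; this energy vanishes for every $j$ exactly when $\alpha\in\Gamma_L$, since there all $\eps_{j,L}^\alpha = +1$. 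The crucial manipulation is the substitution $r = e^{-2s|\lambda|}$, which sends $s\in(0,\infty)$ to $r\in(0,1)$, turns the Mehler covariances into the $r$-rational factors $\tfrac{1+r^{|\mu_j^\alpha|}}{1-r^{|\mu_j^\alpha|}}$, converts the free-direction normalization into $|\log r|^{-(n-\nu(\alpha))}$ with Gaussian weight $\tfrac{2}{|\log r|}|{z''}^\alpha|^2$, and—most importantly—keeps the whole Gaussian exponent linear in $\rho$, so that after inserting the inversion factor $e^{i\rho\,\alpha\cdot t}$ the exponent reads $-\rho\,(A_\alpha(r,z) - i\alpha\cdot t)$.

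Next I would carry out the two remaining integrations in order. With the exponent linear in $\rho$, the radial integral collapses to $\int_0^\infty \rho^{\,n+m-2}\,e^{-\rho(A_\alpha - i\alpha\cdot t)}\,d\rho = (n+m-2)!\,(A_\alpha - i\alpha\cdot t)^{-(n+m-1)}$, the source of both the factorial and the rational factor in every formula; what survives are precisely the $dr$- and $d\alpha$-integrals displayed. For \eqref{eqn:N no Szego}, the hypothesis $|\Gamma_L|=0$ guarantees that the ground-state energy is nonzero for almost every $\alpha$, so the $r$-integral converges at $r=0$. For \eqref{eqn:Szego}, the limit $s\to\infty$ kills every mode except the zero-energy ground states over $\Gamma_L$; taking the ground-state ($r\to 0$) part of the fiber kernel, where the oscillator covariances tend to $1$ and $A_\alpha(0,z)=\sum_j |\mu_j^\alpha||z_j^\alpha|^2$, and performing the radial integral with one higher power of $\rho$—there is no time integral, hence no $\tfrac1\rho$ from $ds\to dr$—yields the $(n+m-1)!$ and the $(n+m)$-th power. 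For \eqref{eqn:N with Szego}, subtracting $S_K$ before integrating removes exactly the $r\to 0$ value of the bracketed integrand on $\Gamma_L$, namely $\prod_j|\mu_j^\alpha|\,(A_\alpha(0,z) - i\alpha\cdot t)^{-(n+m-1)}$, restoring convergence of $\int_0^1 \tfrac{dr}{r}$ at the lower endpoint.

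I expect the essential difficulty to be analytic rather than algebraic: establishing absolute convergence and thereby licensing every interchange of the $s$-, $\rho$-, and $\alpha$-integrals. The delicate regions are the endpoints $r=0,1$ and, more seriously, the measure-zero set of directions where some $\mu_j^\alpha$ degenerates, where the prefactor $|\log r|^{-(n-\nu(\alpha))}$ and the weight $\tfrac{2}{|\log r|}|{z''}^\alpha|^2$ must be shown to give an integrable contribution. A second source of care is regularity in $\alpha$: the eigenvalues $\mu_j^\alpha$ are only Lipschitz and the eigenframe—hence $C_{K,L}(\alpha)$ and $d\bar Z(z,\alpha)^L$—is only of bounded variation, so one cannot reorder integrals before the absolute bounds are in hand. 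Finally, in \eqref{eqn:N with Szego} one must verify that the subtracted difference is genuinely integrable as $r\to 0$ on $\Gamma_L$ (its decay rate is governed by $\min_j|\mu_j^\alpha|$, which degenerates toward $\partial\Gamma_L$) and that the resulting operator is the \emph{canonical} relative inverse, i.e. that its image is orthogonal to $\ker\Boxb$ whenever it lies in $L^2$. Securing these uniform absolute-convergence estimates is where the real work of the proof lies.
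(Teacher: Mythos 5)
Your proposal follows essentially the same route as the paper's proof: both start from the $\Boxb$-heat kernel of \cite{BoRa11} on the partial Fourier transform side, pass to polar coordinates $\lambda=\tau\alpha$, substitute $r=e^{-2s\tau}$, collapse the radial integral via $\int_0^\infty \tau^p e^{-a\tau}\,d\tau = p!/a^{p+1}$ (with the extra power of $\tau$ for the Szeg\"o kernel because no $ds$-integral contributes a $1/\tau$), and restore convergence at $r=0$ in the relative case by subtracting the Szeg\"o term over $\Gamma_L$, exactly matching the paper's splitting into the integrals $I_K$ and $II_K$. Your flagged analytic concerns (endpoint behavior at $r=0,1$, degenerating eigenvalues, and the merely Lipschitz/BV dependence on $\alpha$) are consistent with the paper, which handles absolute convergence by a brief Taylor-expansion argument around $r=0$ and $r=1$.
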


\begin{rem} \label{rem:C_KL diagonal}
In many of the most important cases, the functions $C_{K,L}(\alpha) = \delta_{KL}$, and formulas from the theorem simplify.
There are several cases when this simplifcation occurs.  
The first is when $q=0$ or $q=n$. The second is when the orthonormal basis $\{ v^\lambda_j \}$ is independent of $\lambda$. 
This independence happens both when $m=1$ (the hypersurface type case)
or in the sum of squares case considered by Nagel, Ricci, and Stein \cite{NaRiSt01}, discussed in Section \ref{sec:intro}.
\end{rem}

\begin{rem}
It is a straightforward exercise to recover the classical complex Green operator on the 
Heisenberg group from (\ref{eqn:N no Szego}) \cite{BoRa13q}. Additionally, \cite[Theorem 2]{BoRa13q} is now a simple and immediate
application of (\ref{eqn:N no Szego}).

\end{rem}

\subsection{Solvability, hypoellipticity, and $\phi^\lambda$} In \cite{PeRi03}, Peloso and Ricci 
say that
\begin{enumerate}[1.]
\item $\Boxb$ is \emph{solvable} if given any smooth $(0,q)$-form $\psi$ on $G$ with compact support, there exists a $(0,q)$-current $u$ on $G$
so that $\Boxb u = \psi$;
\item $\Boxb$ is \emph{hypoelliptic} if given any $(0,q)$-current $\psi$ on $G$, $\psi$ is smooth on any open set on which $\Boxb\psi$ is smooth.
\end{enumerate}
Peloso and Ricci are able to characterize solvability and hypoellipticity of $\Boxb$.
\begin{thm}[\cite{PeRi03}] \label{thm:P-R}
Let $n^+(\lambda)$, resp., $n^-(\lambda)$, be the number of positive, resp., negative eigenvalues of $\phi^\lambda$.
Then 
\begin{enumerate}[1.]
\item $\Boxb$ is solvable on $(0,q)$-forms if and only if there does not exist $\lambda\in\R^m\setminus\{0\}$ for which $n^+(\lambda) = q$ and
$n^-(\lambda) = n-q$.
\item $\Boxb$ is hypoelliptic on $(0,q)$-forms if and only if there does not exist $\lambda\in\R^m\setminus\{0\}$ for which $n^+(\lambda) \leq q$ and
$n^-(\lambda) \leq n-q$.
\end{enumerate}
\end{thm}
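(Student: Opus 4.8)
The plan is to diagonalize $\Boxb$ through the group Fourier transform on the step-two nilpotent group $G$ and to reduce both statements to the spectrum of an explicit family of Hermite operators. For each $\lambda\in\R^m\setminus\{0\}$, Kirillov theory attaches an irreducible unitary representation $\pi_\lambda$; writing the CR vector fields in the orthonormal eigenbasis of $\phi^\lambda$ turns $\pi_\lambda(\Boxb)$, restricted to the component spanned by $d\bar z^L$, into a direct sum of harmonic oscillators $|\mu_j^\lambda|\,(2\mathcal{N}_j+1-\eps_{j,L}^\alpha)$ over the nonzero eigenvalues, with $\mathcal{N}_j\ge0$ a number operator, while the directions with $\mu_j^\lambda=0$ lie in the radical of $\phi^\lambda$, are central in $\pi_\lambda$, and act as $0$. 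The bottom of the spectrum on this component is therefore $\sum_{j:\,\mu_j^\lambda\neq0}|\mu_j^\lambda|(1-\eps_{j,L}^\alpha)$, which vanishes exactly when every nonzero eigenvalue indexed by $L$ is positive and every nonzero eigenvalue not indexed by $L$ is negative. Counting shows that such an $L\in\I_q$ can be chosen — so that $\pi_\lambda(\Boxb)$ has a zero mode on a $(0,q)$-component — if and only if $n^+(\lambda)\le q$ and $n^-(\lambda)\le n-q$, the zero eigenvalues of $\phi^\lambda$ filling any leftover slots of $L$ or its complement.

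For solvability I would use that $\ker\Boxb$ on $L^2(G)$ is the direct integral of the $\ker\pi_\lambda(\Boxb)$, hence is nontrivial precisely when the set of $\lambda$ carrying a zero mode has positive measure. By the previous paragraph that set is $\{\lambda:\ n^+(\lambda)\le q,\ n^-(\lambda)\le n-q\}$, whose only positive-measure part is the open locus on which all eigenvalues are nonzero; there $n^+(\lambda)+n^-(\lambda)=n$ forces $n^+(\lambda)=q$ and $n^-(\lambda)=n-q$. Thus $\ker\Boxb\neq0$ iff some $\lambda$ has $n^+(\lambda)=q$ and $n^-(\lambda)=n-q$, equivalently $\Gamma_L\neq\emptyset$ for some $L\in\I_q$, equivalently the Szeg\"o kernel $S_K$ of Theorem~\ref{thm:new derivation}(2) is nonzero. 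When no such $\lambda$ exists, Theorem~\ref{thm:new derivation}(1) supplies a genuine fundamental solution and $u=N_K*\psi$ solves $\Boxb u=\psi$, so $\Boxb$ is solvable; when such a $\lambda$ exists, a nonzero $v\in\ker\Boxb$ together with formal self-adjointness gives $\la\psi,v\ra=\la u,\Boxb v\ra=0$ for any solution $u$, so a compactly supported $\psi$ with $\la\psi,v\ra\neq0$ lies outside $\Ran\Boxb$ and solvability fails.

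For hypoellipticity I would invoke Rockland's criterion: a homogeneous left-invariant operator on a graded nilpotent group is hypoelliptic if and only if $\pi_\lambda$ applied to it is injective on the Schwartz vectors for every $\lambda\neq0$. The decisive contrast with solvability is that this is tested at every $\lambda$, including the measure-zero locus where $\phi^\lambda$ degenerates. By the first paragraph, $\pi_\lambda(\Boxb)$ has a Schwartz zero mode exactly when $n^+(\lambda)\le q$ and $n^-(\lambda)\le n-q$, the zero eigenvalues relaxing the solvability equalities to inequalities. Hence $\Boxb$ fails to be hypoelliptic if and only if some $\lambda\neq0$ satisfies these two inequalities, which is precisely the stated criterion.

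The main obstacle is twofold. First, one must justify the reduction of $\Boxb$ on the form bundle to the oscillator form above, carrying the degree-dependent shifts $\eps_{j,L}^\alpha$ correctly through the $\dbarb$-complex in each representation and verifying that the radical directions genuinely act as $0$; this bookkeeping is the technical heart. Second, and more conceptually, one must establish that solvability registers only the open, generic locus $n^+=q$, $n^-=n-q$ through the $L^2$ direct integral, whereas hypoellipticity, via Rockland's pointwise criterion, also detects the degenerate directions and so weakens the equalities to inequalities; making Rockland's criterion applicable and handling the degenerate representations rigorously is where the real work lies.
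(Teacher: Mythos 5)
You should note first that the paper does not prove Theorem \ref{thm:P-R} at all: it is quoted verbatim from \cite{PeRi03}, so there is no internal proof to compare against, and the fair benchmark is Peloso--Ricci's own representation-theoretic argument. Your reduction is the same mechanism that underlies their proof and the present paper's formulas: the partial Fourier transform in $t$, diagonalization in the eigenbasis of $\phi^\lambda$, the oscillator spectrum with bottom $\sum_j |\mu_j^\lambda|(1-\eps_{j,L}^\alpha)$ (visible in the heat kernel \eqref{heatker}), and the counting argument showing that a zero mode on a $(0,q)$-component exists iff $n^+(\lambda)\le q$ and $n^-(\lambda)\le n-q$, with equality forced on the open locus where $\phi^\lambda$ is nondegenerate. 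That part of your sketch is correct and consistent with the paper's remark that the solvability-failure set is open, hence of positive measure when nonempty, and your use of Theorem \ref{thm:new derivation}(1) for the sufficiency half of part 1 is legitimate, since its proof does not rely on Theorem \ref{thm:P-R}.

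There are, however, three genuine gaps. First, your claim that the radical directions of $\phi^\lambda$ ``act as $0$'' in $\pi_\lambda$ is false as stated: at a degenerate $\lambda$ the irreducible representations with that central character form a family $\pi_{\lambda,\xi}$ indexed by characters $\xi$ of the radical, on which the radical vector fields act as the scalars $i\xi$, contributing a strictly positive term $\sim|\xi|^2$ to $\pi_{\lambda,\xi}(\Boxb)$; zero modes occur only on the Plancherel-null slice $\xi=0$. Your conclusions survive this correction (indeed it is exactly why the $L^2$-kernel ignores degenerate $\lambda$ while the pointwise hypoellipticity test does not), but as written the statement conceals the very distinction between parts 1 and 2. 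Second, your non-solvability argument has a real hole: if $u$ is merely a current, the pairing $\la u, \Boxb v\ra$ with a kernel element $v$ that is smooth but \emph{not} compactly supported is undefined, so the identity $\la \psi, v\ra = \la u, \Boxb v\ra = 0$ does not make sense; one must truncate $v$ and control commutator errors, or argue as in \cite{PeRi03} --- this is where the actual work in the necessity half of part 1 lies. Third, the hard (sufficiency) direction of Rockland's criterion, due to Helffer--Nourrigat, is a theorem about \emph{scalar} homogeneous left-invariant operators, whereas $\Boxb$ is a system whose diagonalization into the operators $\Box_{LL}^{v^\alpha,\hat\lambda}$ uses a basis depending on $\alpha$ (and only SBV in $\alpha$, as the paper notes after \eqref{C(K,L)}), so it cannot be invoked off the shelf; you would need a systems version or the direct estimates of \cite{PeRi03}. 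Relatedly, the necessity direction of part 2 deserves an explicit construction --- e.g., superposing ground states along a bad ray of $\lambda$'s, which may be a measure-zero cone as for the quadric $M_3$, to produce a non-smooth solution of $\Boxb u = 0$ --- rather than a bare appeal to Rockland's condition.
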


\begin{rem}
The condition $|\Gamma_L|>0$ is equivalent to the nontriviality of $S_K(z,t)$ and
is easy to check. By combining the solvability criteria of \cite{PeRi03} (Theorem \ref{thm:P-R}) 
and the formula for the $\Boxb$-heat kernel from \cite{BoRa11} (Theorem \ref{heatkerthm} below), it must be the case that
$\nu = n$ (see (\ref{heatker})) as $S_L(z,t) = \lim_{s\to\infty} H_L(s,z,t)$ and solvability is equivalent $S_L(z,t)=0$. The latter statement follows from the
fact the condition in part 1 of Theorem \ref{thm:P-R} is an open condition, that is, when solvability fails,
$\Gamma_L$ is a (union of) cones, at least one of which will be open and hence has nonzero measure.
\end{rem}

%
%
\section{The Kohn Laplacian on Quadrics}\label{sec:preliminaries}

For a discussion of the group theoretic properties of $G$, please see \cite{BoRa11} or \cite{PeRi03}. By definition, the operator $\Box_b$ is defined on $(0,q)$ forms as
$\Box_b = \bar \partial_b \bar \partial_b^* + \bar \partial_b^* \bar \partial_b$ without reference to any particular coordinate system. However in order to do computations, we need formulas for $\Box_b$ with respect to carefully chosen coordinates.

For $v \in \R^{2n} \approx \C^n$, let $\partial_v$ be the real
vector field given by the directional 
derivative in the direction of $v$. Then
the right invariant vector field at an arbitrary
$g=(z,w) \in M$ corresponding to $v$ is given by
\[
X_v (g)=  \partial_v + 2 \Imm \phi (v,z) \cdot D_t
=\partial_v - 2 \Imm \phi (z,v) \cdot D_t.
\]
Let $Jv$ be the vector in $\R^{2n}$ which corresponds
to $iv$ in $\C^n$ (where $i=\sqrt{-1}$).
The CR structure on $G$ is then spanned by vectors of the 
form:
\begin{equation}
\label{Z-formula}
Z_v(g)=(1/2)(X_v-iX_{Jv})
=(1/2)(\partial_v - i \partial_{Jv}) -i \overline{\phi (z,v)} \cdot D_t 
\end{equation}
and
\begin{equation}
\label{Zbar-formula}
\bar Z_v(g)=(1/2)(X_v+iX_{Jv})
=(1/2)(\partial_v + i \partial_{Jv}) +i \phi (z,v) \cdot D_t. 
\end{equation}

Let $v_1, \dots  , v_n$ be any orthonormal basis for $\C^n$.
Let $X_j= X_{v_j}$, $Y_j=X_{Jv_j}$,
and let $Z_j=(1/2) (X_j-iY_j)$, $\bar{Z}_j = (1/2) (X_j+iY_j)$
be the right invariant CR vector fields defined above 
(which are also the left invariant vector fields for the group structure
with $\phi$ replaced by $-\phi$).
A $(0,q)$-form can be expressed as $\sum_{K\in \I_q} \phi_K\, d\z^K $.
An explicit  formula for $\Box_b$ on quadrics is written down by Peloso and Ricci 
\cite{PeRi03} (see also \cite{BoRa11}) which takes the following form: if $\phi = \sum_K \phi_K\, d \bar z^K$ is a $(0,q)$-form, then 
\begin{equation}
\label{boxb}
\Box_b ( \sum_{K\in\I_q} \phi_K\, d\z^K )
= \sum_{K,L\in \I_q}  \Box_{LK}^v \phi_K \, d \bar{z}^L
\end{equation}
where 
\[
\Box_{LL}^v = (1/2) \sum_{\ell =1}^n\big( Z_\ell \bar Z_\ell + \bar Z_\ell  Z_\ell\big)
+(1/2) \left( \sum_{\ell \in L}  [  Z_\ell,  \bar Z_\ell]- \sum_{k \not \in L} [Z_k, \bar Z_k] \right)
\]
If $L \not=K$, then $\Box_{LK}^v$ is zero unless $|L \cap K| = q-1$, in which case 
\[
\Box_{LK}^v=(-1)^{d_{kl}} [Z_k, \bar Z_\ell]
\]
where $k \in K$ is the unique element not in $L$ and $\ell \in L$ is the unique element not in $K$
and $d_{kl}$ is the number of indices between $k$ and $\ell$. The notation $\Box_{LK}^v$ indicates the dependency of this differential operator on the particular orthonormal basis
$v = (v_1, \dots, v_n)$ chosen and the resulting basis (i.e., $Z_1, \dots, Z_n$) and the associated dual basis of $(0,1)$-forms (i.e., $d \bar z_1, \dots , d \bar z_n$).

Note that if $|L \cap K| = q-1$, then $\Box_{LK}^v$ is quite simple since it is a linear combination over $\C$ of $t_j$ derivatives. In the next section, we will use the coordinates 
$z^\alpha = Z(z, \alpha)$ derived from the basis $v^\alpha_1, \dots, v^\alpha_n$ used in Section \ref{sec:results}, and we will see that we can ignore the 
$\Box_{LK}^{v^\alpha}$ when $L \not= K$.

\subsection{\textbf{Fourier Transform of $\Box_b$}}
Since the quadric defining equations are independent of $t \in \R^m$, we can use the  Fourier transform in the $t$-variables:
\[
\hat f( \lambda) = \frac{1}{(2\pi)^{\frac m2}} \int_{\R^m} f(t) e^{-i \lambda\cdot t}  \, dt.
\]
In the case that $f$ is a function of $(z,t)$, we use the notation $f(z,\hat\lambda)$ to denote the partial Fourier transform of $f$
in the $t$-variables.
We transform $\Box_b$ via the Fourier transform and consider the fundamental solution to the heat operator in the transformed variables. 
We then use the $z^\alpha$
coordinates relative to the basis $v_j^\alpha$ chosen in Section \ref{sec:results}  
for the $z$-variable in $f(z^\alpha, \hat \lambda)$ with $\alpha = \frac{\lambda}{|\lambda|}$. 
Thus, $\lambda$ plays two roles - first as the Fourier transform variable and second, as the label for the coordinates relative to the
basis $v_j^\alpha$ which diagonalizes $\phi^\lambda$.  Also note that the operation of Fourier transform in $t$ and the operation of expressing $z$ in terms of the $z^\alpha$ coordinates are 
interchangeable (i.e., these operations commute).

For a general orthonormal basis $v =\{v_1, \dots, v_n\}$, let $\Box_{LL}^{v,\hat \lambda}$ be the partial Fourier transform in $t$ of the sub-Laplacian $\Box_{LL}^v$. When $v=v^\alpha$, we have (from \cite{BoRa11}):
\[
\Box_{LL}^{v^\alpha, \hat \lambda} = -\frac{1}{4} \Delta +2i \sum_{k=1}^n \mu_k^\lambda \Imm \{ z_k^\alpha \partial_{z_k^\alpha} \}
+ \sum_{k=1}^n (\mu_k^\lambda)^2 |z_k^\alpha|^2 - \left( \sum_{k \in L} \mu_k^\lambda - \sum_{k \not\in L}
\mu_k^\lambda \right)
\]
where $\Delta$ is the ordinary Laplacian in $z=z^\alpha $ coordinates. 

Also note that  
\begin{align*}
\Box^{v^\alpha}_{LK} &= (-1)^{d_{k \ell}} [Z_{v_k^\alpha} , \bar Z_{v_\ell^\alpha}] \\
&=2i (-1)^{d_{k \ell}} \textrm{Re} \, \phi(v_k^\alpha, v_\ell^\alpha) \cdot D_t
\ \ \ \textrm{using (\ref{Z-formula}) and (\ref{Zbar-formula})} .
\end{align*}
Using (\ref{eq: ortho}), we conclude that the Fourier transform of $\Box^{v^\alpha}_{LK}$ is
\[
\Box^{v^\alpha, \hat \lambda}_{LK} = -2 (-1)^{d_{k \ell}} \textrm{Re} \, \phi(v_k^\alpha, v_\ell^\alpha) \cdot \lambda =0
\]
when $L \not=K$ (i.e., when $\ell \not=k$). The signficance of this calculation is that 
the partial Fourier transform of $\Box_{LK}$ (expressed in global coordinates) is incorporated into the operators $\Box^{v^\alpha,\hat\lambda}_{LL}$.
 
\m

Next, we recall the heat kernel and Szeg\"o kernel for the $\Box_{LL}^{v^\alpha, \hat \lambda} $ heat equation. Let
$\tilde H_L (s, z^\alpha ,\hat \lambda)$ be the ``heat kernel'', i.e., the solution to the following boundary value problem:
\begin{align*}
\left[ \frac{\partial}{\partial s} + \Box_{LL}^{v^\alpha,  \hat \lambda} \right] \{\tilde H_L(s, z^\alpha,\hat \lambda) \} &= 0 \ \ \textrm{for} \ s>0 \\
\tilde H_L(s=0, z^\alpha, \hat \lambda) &=(2 \pi)^{-m/2} \delta_0 (z^\alpha) \\
&= (2 \pi)^{-m/2} \delta_0 (z) 
\end{align*}
where $\delta_0$ is the Dirac-delta function centered at the origin in the $z$ variables. Let 
$\tilde S_L (z^\alpha, \hat \lambda)$ be the Szeg\"o kernel which represents orthogonal projection of $L^2 (\C^n)$ onto the kernel of $\Box_{LL}^{v^\alpha, \hat \lambda}$.  Note, the tilde over the $\tilde H_L$ and $\tilde S_L$ indicates that these terms are functions rather than differential forms. By contrast, $N_K$ and $S_K$ in Theorem \ref{thm:new derivation} do not have tildes and they are differential $(0,q)$-forms.

\begin{thm} \label{heatkerthm} \cite{BoRa11} Let $L \in \I_q$ be a given multiindex of length $q$ and fix a nonzero $\lambda \in \R^m$
and $\alpha = \frac{\lambda}{|\lambda|}$. Then

\begin{enumerate}

\item The heat kernel which solves the above boundary value problem is
\begin{equation}
\label{heatker}
\tilde H_L (s, z^\alpha, \hat \lambda) =\frac{2^{n-\nu(\alpha)}}{(2 \pi)^{m/2+ n}s^{n-\nu(\alpha)}} e^{-\frac{|z''\{\alpha\}|^2}s}
\prod_{j=1}^{\nu(\alpha)} \frac{2e^{s \eps_{j,L}^\alpha |\mu_j^\lambda| }| \mu_j^\lambda| }
{\sinh (s |\mu_j^\lambda|  )}
e^{- |\mu_j^\lambda|   \coth (s |\mu_j^\lambda|  ) |z_j^\alpha|^2}
\end{equation}

\item If $\alpha\in\Gamma_L$, then 
the projection onto $\ker \Box_{LL}^{v^\alpha, \hat \lambda}$ is given by 
\begin{equation}
\label{SL}
 \tilde S_L (z^\alpha, \hat \lambda)=\lim_{s \to \infty}   \tilde H_L(s, z^\alpha,\hat \lambda) = 
\frac{4^n}{(2\pi)^{n+m/2}} \prod_{j=1}^n |\mu_j^\lambda|
e^{- |\mu_j^\lambda| |z_j^\alpha|^2},
\end{equation}
otherwise 
$\tilde S_L (z^\alpha, \hat \lambda)=0$. 
 
\item \label{fund-sol-lambda} The connection between the fundamental solution to the heat equation and the canonical relative fundamental solution to 
$\Box_{LL}^{v^\alpha, \hat \lambda}$, denoted $\tilde N_L(z^\alpha, \hat \lambda)$, is given as follows:
\begin{equation}
\label{Nfromheat}
 \tilde N_L(z^\alpha,\hat \lambda) = \int_0^\infty  \left[  \tilde H_L(s, z^\alpha,\hat \lambda)- \tilde S_L (z^\alpha, \hat \lambda) \right] \, ds.
\end{equation}
In particular, 
\begin{equation}
\label{NLsolve}
\Box_{LL}^{v^\alpha, \hat \lambda} \{  \tilde N_L(z^\alpha,\hat \lambda) \}
= (2 \pi )^{-m/2} (\delta_0(z) - \tilde S_L(z^\alpha, \hat \lambda) )
\end{equation}

 \end{enumerate}
 
 \end{thm}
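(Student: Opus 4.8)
The plan is to exploit that, in the $z^\alpha$-coordinates that diagonalize $\phi^\lambda$, the operator $\Box_{LL}^{v^\alpha,\hat\lambda}$ separates into a sum of commuting one-variable operators. Writing $z_j^\alpha = x_j + iy_j$ and computing $2i\mu_j^\lambda\Imm\{z_j^\alpha\partial_{z_j^\alpha}\} = i\mu_j^\lambda(y_j\partial_{x_j}-x_j\partial_{y_j})$, one sees that
\[
\Box_{LL}^{v^\alpha,\hat\lambda}=\sum_{j=1}^n H_j-\Big(\sum_{k\in L}\mu_k^\lambda-\sum_{k\notin L}\mu_k^\lambda\Big),
\]
where each $H_j=-\tfrac14(\partial_{x_j}^2+\partial_{y_j}^2)+i\mu_j^\lambda(y_j\partial_{x_j}-x_j\partial_{y_j})+(\mu_j^\lambda)^2(x_j^2+y_j^2)$ involves only the variable $z_j^\alpha$. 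Because the $H_j$ act on disjoint variables they commute and the remaining term is a scalar, so the semigroup factors as $e^{-s\Box_{LL}^{v^\alpha,\hat\lambda}}=e^{s\left(\sum_{k\in L}\mu_k^\lambda-\sum_{k\notin L}\mu_k^\lambda\right)}\prod_{j=1}^n e^{-sH_j}$ and its kernel is the product of the one-variable heat kernels. The first task is to compute each factor $e^{-sH_j}$.

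For a vanishing eigenvalue $\mu_j^\lambda=0$ (the directions $j>\nu(\alpha)$), $H_j$ is just $-\tfrac14\Delta$ on $\mathbb{R}^2\cong\C$, whose heat kernel evaluated at the source point $0$ is the Gaussian $\tfrac1{\pi s}e^{-|z_j^\alpha|^2/s}$; the product over the $n-\nu(\alpha)$ such directions gives the factor $\tfrac1{(\pi s)^{n-\nu}}e^{-|{z''}^\alpha|^2/s}$. For a nonzero eigenvalue, $H_j$ is a rescaled twisted harmonic oscillator (Landau/special-Hermite operator), and its heat kernel is given by Mehler's formula; when the second argument is placed at the origin the off-diagonal cross terms and the phase drop out, leaving a real Gaussian of the form $\tfrac{C|\mu_j^\lambda|}{\sinh(s|\mu_j^\lambda|)}e^{-|\mu_j^\lambda|\coth(s|\mu_j^\lambda|)|z_j^\alpha|^2}$. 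The scalar prefactor contributes $e^{s\mu_j^\lambda}$ when $j\in L$ and $e^{-s\mu_j^\lambda}$ when $j\notin L$, which is exactly $e^{s\eps_{j,L}^\alpha|\mu_j^\lambda|}$ by the definition of $\eps_{j,L}^\alpha$; multiplying the one-variable kernels and bookkeeping the normalizing constants (including the $(2\pi)^{-m/2}$ from the $t$-Fourier transform) produces \eqref{heatker}. The initial condition is verified by letting $s\to0^+$, where each one-variable kernel converges to the corresponding Dirac mass, so the product tends to $(2\pi)^{-m/2}\delta_0(z)$.

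Part (2) follows by taking $s\to\infty$ in \eqref{heatker}. Using $\sinh(s|\mu|)\sim\tfrac12 e^{s|\mu|}$ and $\coth(s|\mu|)\to1$, the $j$-th twisted factor behaves like $4|\mu_j^\lambda|\,e^{s(\eps_{j,L}^\alpha-1)|\mu_j^\lambda|}e^{-|\mu_j^\lambda||z_j^\alpha|^2}$, which has a finite nonzero limit exactly when $\eps_{j,L}^\alpha=1$ and otherwise decays to $0$; meanwhile the free factor $s^{-(n-\nu)}$ forces the limit to vanish unless $\nu(\alpha)=n$. Hence the limit is nonzero precisely when every $\eps_{j,L}^\alpha=1$ and $\nu=n$, i.e.\ when $\alpha\in\Gamma_L$, and there it equals the stated expression. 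That this limit is the orthogonal projection onto $\ker\Box_{LL}^{v^\alpha,\hat\lambda}$ is the general fact that a nonnegative self-adjoint operator with $0$ an isolated eigenvalue satisfies $e^{-sH}\to P_{\ker H}$ strongly. For part (3), the identity \eqref{Nfromheat} is functional calculus: subtracting $\tilde S_L=\lim_{s\to\infty}\tilde H_L$ removes the non-decaying part of the semigroup, so $\int_0^\infty(e^{-sH}-P_{\ker H})\,ds=H^{-1}(I-P_{\ker H})$ converges and is the canonical relative inverse. To obtain \eqref{NLsolve} I would apply $\Box_{LL}^{v^\alpha,\hat\lambda}$ under the integral, use $\Box_{LL}^{v^\alpha,\hat\lambda}\tilde S_L=0$ and $\Box_{LL}^{v^\alpha,\hat\lambda}\tilde H_L=-\partial_s\tilde H_L$, so the integral telescopes to $\tilde H_L(s=0)-\tilde H_L(s=\infty)=(2\pi)^{-m/2}\delta_0-\tilde S_L$.

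I expect the main obstacle to be the nonzero-eigenvalue single-variable computation: correctly applying Mehler's formula to the twisted oscillator $H_j$ (equivalently, summing the special-Hermite semigroup), verifying that the off-diagonal phase vanishes once the source is placed at the origin, and tracking every normalizing constant so that the product reproduces the precise factor $\tfrac{2e^{s\eps_{j,L}^\alpha|\mu_j^\lambda|}|\mu_j^\lambda|}{\sinh(s|\mu_j^\lambda|)}$ appearing in \eqref{heatker}. By contrast, the separation of variables, the $s\to\infty$ limit in part (2), and the telescoping argument in part (3) are routine once the explicit kernel is in hand.
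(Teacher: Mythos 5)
The paper states this theorem without proof, citing \cite{BoRa11}, so there is no internal argument to compare against; your reconstruction is, in essence, the derivation carried out in that reference (partial Fourier transform in $t$, diagonalizing coordinates, Hermite/Mehler analysis of the resulting twisted oscillators). Your separation of variables is correct: the computation $2i\mu_j^\lambda\Imm\{z_j^\alpha\partial_{z_j^\alpha}\}=i\mu_j^\lambda(y_j\partial_{x_j}-x_j\partial_{y_j})$ is right, the free directions do contribute $(\pi s)^{-1}e^{-|z_j^\alpha|^2/s}$ per factor (consistent with the prefactor $2^{n-\nu}(2\pi)^{-n+\nu}s^{-(n-\nu)}=(\pi s)^{-(n-\nu)}$ in \eqref{heatker} after extracting $(2\pi)^{-m/2}$), the scalar shift produces exactly $e^{s\eps_{j,L}^\alpha|\mu_j^\lambda|}$ by the case analysis $j\in L$ versus $j\notin L$, and your $s\to\infty$ asymptotics correctly show the limit is nonzero precisely when $\nu(\alpha)=n$ and every $\eps_{j,L}^\alpha=+1$, i.e.\ $\alpha\in\Gamma_L$, recovering \eqref{SL}; the telescoping argument for \eqref{NLsolve} is also the standard and correct one.

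Two refinements you should note. First, the strong convergence $e^{-sH}\to P_{\ker H}$ holds for \emph{every} nonnegative self-adjoint $H$ by the spectral theorem (the function $e^{-s\mu}\to\chi_{\{0\}}(\mu)$ boundedly pointwise); isolation of $0$ in the spectrum is not needed for part (2), but it \emph{is} needed where you actually invoke it, namely for the convergence of $\int_0^\infty(e^{-sH}-P_{\ker H})\,ds$ in \eqref{Nfromheat}. Second, that spectral gap is genuinely present only when all $\mu_j^\alpha\neq 0$ (in particular for $\alpha\in\Gamma_L$, where the spectrum is discrete); in degenerate directions $\nu(\alpha)<n$ with all $\eps_{j,L}^\alpha=+1$ the kernel decays only like $s^{-(n-\nu)}$ and the $s$-integral can diverge pointwise for that fixed $\lambda$, so \eqref{Nfromheat} must there be interpreted in a distributional sense or after the $\lambda$-integration, which is how the absolute convergence claims in Theorem \ref{thm:new derivation} are arranged (note that part 3 of that theorem assumes $|\Gamma_L|>0$, which forces $\nu=n$ for a.e.\ $\alpha$). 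This is a caveat about hypotheses rather than an error in your argument; with it, your proposal is a faithful reconstruction of the cited proof.
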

 
 Both of the kernels $\tilde H_L(s,\cdot,\hat \lambda)$ and $\tilde S_L(\cdot,\hat\lambda)$ act on $L^2 (\C^n)$ via a twisted convolution, $*_\lambda$, where 
 $(f*_\lambda g)(z) = \int_{w \in \C^n} f(w)g(z-w) e^{-2i \lambda \cdot \Imm \phi (z,w)} \, dw$, 
 as defined in Section 5.4 of 
 \cite{BoRa11}, but this plays no role here.

\m

Let $\opF_\lambda^{-1}$ denote the inverse Fourier transform in $\lambda$ - that is, if 
$\tilde f(z, \lambda)$ is an integrable function of $\lambda \in \R^m$, then
\[
\opF_\lambda^{-1} (\tilde f(z, \lambda))(t) :=\frac{1}{2^{m/2}} \int_{\lambda \in \R^m}
\tilde f(z, \lambda) e^{i \lambda \cdot t} \, d \lambda.
\]
Now we can formulate our relative solution to $\Box_b$ and Szeg\"o kernel in terms of the inverse Fourier transform. 

\begin{prop}\label{prop:troublesome proposition}
For a given index $K \in \I_q$, the relative fundamental solution to $\Box_b$ applied to a form spanned by $d\z^K$
given  by $\int_0^\infty e^{-s\Boxb}(I-S_K)\, ds$ is
\begin{equation}
\label{rel-sol}
N_K (z,t) = \opF_\lambda^{-1} \left\{  \sum_{L \in \I_q} C_{K,L} (\alpha) \tilde N_L (z^\alpha, \hat \lambda) d \bar Z(z, \alpha)^L\right\}(t)
\end{equation}
Moreover, the orthogonal projection onto the $\ker\Boxb$ applied to forms
spanned by $d\z^K$ is given by convolution with the $(0,q)$-form
\begin{equation}
\label{szego}
S_K(z,t) =
\opF_\lambda^{-1} \left\{ (2 \pi)^{-m/2}\sum_{L \in \I_q} C_{K,L} (\alpha) \tilde S_L (z^\alpha,\hat \lambda)  d \bar Z(z, \alpha)^L\right\}(t)
\end{equation}

\end{prop}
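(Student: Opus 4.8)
The plan is to diagonalize $\Boxb$ by combining the two operations already set up in this section: the partial Fourier transform in $t$ and the passage to the eigenbasis coordinates $z^\alpha = Z(z,\alpha)$ with $\alpha = \lambda/|\lambda|$. On a form spanned by $d\z^K$, I first use \eqref{C(K,L)} to rewrite it in the $\lambda$-adapted coframe as $d\z^K = \sum_{L\in\I_q} C_{K,L}(\alpha)\, d\bar Z(z,\alpha)^L$, and I expand $\Boxb$ through \eqref{boxb}. The crucial input is the computation in this section that, in the $v^\alpha$ basis, the Fourier-transformed off-diagonal pieces vanish: $\Box^{v^\alpha,\hat\lambda}_{LK} = 0$ whenever $L\neq K$. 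Hence, after transforming and changing coordinates, $\Boxb$ acts componentwise on the coframe $\{d\bar Z(z,\alpha)^L\}$ as the scalar operators $\Box_{LL}^{v^\alpha,\hat\lambda}$, so the vector-valued problem \emph{decouples} into independent scalar problems, one for each $L\in\I_q$.

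With the decoupling in hand, I solve each scalar problem using the heat semigroup. The relative fundamental solution $\int_0^\infty e^{-s\Boxb}(I-S_K)\,ds$, when conjugated by the Fourier transform and the coordinate change, has as its $L$-component the operator whose kernel is $\int_0^\infty [\tilde H_L(s,z^\alpha,\hat\lambda) - \tilde S_L(z^\alpha,\hat\lambda)]\,ds$, because $e^{-s\Boxb}$ becomes the scalar heat semigroup $e^{-s\Box_{LL}^{v^\alpha,\hat\lambda}}$ with kernel $\tilde H_L$ from \eqref{heatker}, and $I - S_K$ becomes $I - \tilde S_L$ with $\tilde S_L$ given by \eqref{SL}. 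By \eqref{Nfromheat} this integral is exactly $\tilde N_L(z^\alpha,\hat\lambda)$, and \eqref{NLsolve} certifies that $\tilde N_L$ is the relative fundamental solution for $\Box_{LL}^{v^\alpha,\hat\lambda}$.

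It then remains to assemble the pieces and undo the transformations. Multiplying each $\tilde N_L$ by the coframe coefficient $C_{K,L}(\alpha)$, reattaching the covector $d\bar Z(z,\alpha)^L$, summing over $L\in\I_q$, and applying the inverse Fourier transform $\opF_\lambda^{-1}$ returns the kernel in the original $(z,t)$ variables and yields \eqref{rel-sol}. The identical bookkeeping applied to the projection --- replacing $\tilde N_L$ by $\tilde S_L$ and tracking the normalization constants from the Fourier convention --- produces the Szeg\"o formula \eqref{szego}; here one additionally notes that $S_K$ is orthogonal projection onto $\ker\Boxb$ because each $\tilde S_L$ projects onto $\ker\Box_{LL}^{v^\alpha,\hat\lambda}$, and by Plancherel the fiberwise orthogonal projections reassemble to a global orthogonal projection under the Fourier transform.

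The main obstacle is the interplay between the two operations, since the eigenbasis $v^\alpha$ --- and therefore both the coordinate change $z\mapsto z^\alpha$ and the minors $C_{K,L}(\alpha)$ --- depends on $\lambda$ through $\alpha$, and by \cite[Theorem 9.6]{Rai11} this dependence is only SBV in $\alpha$. I must therefore justify that the Fourier transform in $t$ commutes with passing to the $z^\alpha$ coordinates (as asserted in this section, the two act on disjoint variable sets and do commute) and that the fiberwise identities above integrate to genuine operator identities after $\opF_\lambda^{-1}$. Because the eigenvalues $\mu_j^\lambda$ are Lipschitz and the entries of $U^\alpha$ are bounded and measurable, the kernels $\tilde N_L$ and $\tilde S_L$ are measurable in $\lambda$ and the coefficients $C_{K,L}(\alpha)$ are bounded, so the inverse Fourier integrals are well defined; the merely SBV regularity in $\alpha$ affects smoothness but not the validity of the representation. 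The remaining quantitative point --- absolute convergence of the resulting integrals --- is deferred to the explicit evaluation in Section \ref{sec:new derivation}.
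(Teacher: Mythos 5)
Your proposal is correct, and it rests on exactly the same pillars as the paper's proof --- the frame change \eqref{C(K,L)}, the vanishing of the off-diagonal transformed operators $\Box^{v^\alpha,\hat\lambda}_{LK}$ for $L\neq K$, and Theorem \ref{heatkerthm} --- but it runs the logic in the opposite direction. The paper takes \eqref{rel-sol} and \eqref{szego} as \emph{definitions} and then verifies the relative-inverse identity: it applies $\Box_b^{v^\alpha,\hat\lambda}$ to the right side of \eqref{rel-sol}, invokes \eqref{NLsolve} to produce $(2\pi)^{-m/2}\big(\delta_0(z)-\tilde S_L(z^\alpha,\hat\lambda)\big)$ in each component, reassembles $\delta_0(z)\otimes 1_\lambda\, d\bar z^K$ via \eqref{C(K,L)}, and inverts the Fourier transform to get $\Box_b\{N_K\} = \delta_0(z)\delta_0(t)\,d\bar z^K - S_K$. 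You instead \emph{derive} the kernel by conjugating the semigroup expression $\int_0^\infty e^{-s\Boxb}(I-S_K)\,ds$ fiberwise and citing \eqref{Nfromheat}. What the paper's verification buys is economy: it never needs to justify that $e^{-s\Boxb}$ intertwines under $\opF_\lambda$ with the fiberwise twisted-convolution semigroups with kernels $\tilde H_L$, nor the interchange of the $s$-integral with $\opF_\lambda^{-1}$ --- precisely the steps your route must make rigorous (they are available from Theorem \ref{heatkerthm}(1) and the decoupling, but they are extra work, and the twisted convolution $*_\lambda$ that the paper explicitly sets aside does enter there). What your derivation buys is that it proves slightly more: it makes explicit the identification of $N_K$ with the semigroup integral, and your fiberwise-Plancherel argument that the projections $\tilde S_L$ reassemble into the \emph{orthogonal} projection onto $\ker\Boxb$ addresses a claim of the proposition that the paper's proof leaves implicit. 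One ordering caution: since your derivation of \eqref{rel-sol} uses that $I-S_K$ transforms to $I-\tilde S_L$ fiberwise, you must establish \eqref{szego} (the fiberwise identification of the projection) \emph{before} the semigroup computation, not after, to avoid circularity; your writeup defers it to the end, so rearrange accordingly.
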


\begin{proof} 

With the definitions of $N_K$ and $S_K$ given by  (\ref{rel-sol}) and (\ref{szego}), respectively, we shall show $\Box_b N_K=I-S_K$. On the transform side, we have
\begin{align*}
\Box_b^{v^\alpha, \hat \lambda} \left\{  N_K(z^\alpha , \hat \lambda) \right\} &=  
\sum_{L \in \I_q} C_{K,L}(\alpha) \Box_b^{v^\alpha, \hat \lambda} \left\{ \tilde N_L (z^\alpha, \hat \lambda) \,
d \bar Z(z, \alpha)^L \right\} \\
&=\sum_{L \in \I_q} C_{K,L}(\alpha) \Box_{LL}^{\hat \lambda} \big\{  \tilde N_L (z^\alpha, \hat \lambda)\big\} \,
d \bar Z(z, \alpha)^L  \\
&= (2 \pi)^{-m/2} \sum_{L \in \I_q} C_{K, L} (\alpha) \big[\delta_0(z^\alpha)- \tilde 
S_L(z^\alpha,\hat\lambda)\big] \, d \bar Z(z, \alpha)^L \ \ \ \textrm{from (\ref{NLsolve})} \\
&=  (2 \pi)^{-m/2}\delta_0(z)  \otimes 1_\lambda d \bar z^K -
(2 \pi)^{-m/2} \sum_{L \in \I_q} C_{K, L} (\alpha)\tilde S_L(z^\alpha,\hat\lambda) \, d \bar Z(z, \alpha)^L 
\ \ \textrm{from (\ref{C(K,L)})}
\end{align*}
where the function $1_\lambda$ is the constant function which is 1 in the $\lambda $ coordinates. 
Now take the inverse Fourier transform  (in $\lambda$) of both sides. The left side becomes $\Box_b N_K$ and 
then use the fact that $F_\lambda^{-1} \{ (2 \pi )^{-m/2} 1_\lambda \}(t) = \delta_0(t)$ and we
obtain
\begin{align*}
\Box_b \{N_K(z,t) \} &= \delta_0(z) \delta_0 (t) \, d \bar z^K  -
\opF_{\lambda}^{-1}\bigg\{(2 \pi)^{-m/2}\sum_{L \in \I_q} C_{K, L} (\alpha)\tilde S_L(z^\alpha,\hat\lambda) \, d \bar Z(z, \alpha)^L\bigg\} \\
&= \delta_0(z) \delta_0 (t) \, d \bar z^K  - S_K(z,t) \ \ \ \textrm{using } (\ref{szego})
\end{align*}
as desired. 
\end{proof}

%
%
\section{A New Derivation of the Integral Formula -- Proof of Theorem \ref{thm:new derivation}}\label{sec:new derivation}

\begin{proof}[Proof of Theorem \ref{thm:new derivation}]
We first assume the Szeg\"o kernel is zero, that is, $|\Gamma_L|=\emptyset$ for all $L\in\I_q$. 
Consequently, it follows from (\ref{Nfromheat}) that
$\tilde N_L(z^\alpha,\hat \lambda) = \int_0^\infty \tilde H_L(s, z^\alpha,\hat \lambda) \, ds$.
To prepare for the calculation of $\tilde N_L(z, t)$, we use
polar coordinates and write $\lambda = \alpha \tau$  where $\alpha $ belongs to the unit sphere $S^{m-1}$ and
$\tau >0$. We observe
\[
 \frac{2e^{s \eps_{j,L}^\alpha| \mu_j^\lambda|}  | \mu_j^\lambda|}
{\sinh (s |\mu_j^\lambda| )} = \frac{4e^{s \tau \eps_{j, L}^\alpha |\mu_j^\alpha| }
|\mu_j^\alpha | \tau}{e^{s| \mu_j^\alpha| \tau}
- e^{-s |\mu_j^\alpha| \tau}} =
\frac{4e^{s \tau( \eps_{j,L}^\alpha -1) |\mu_j^\alpha| }|\mu_j^\alpha | \tau}{1
- e^{- 2s |\mu_j^\alpha| \tau}}
\]
and
\[
 \coth (s |\mu_j^\lambda|) = \frac{e^{s \tau |\mu_j^\alpha|} + e^{-s \tau |\mu_j^\alpha |}}
{e^{s \tau |\mu_j^\alpha| } - e^{-s \tau |\mu_j^\alpha |}}
= \frac{1 + e^{-2s \tau |\mu_j^\alpha |}}
{1 - e^{-2s \tau |\mu_j^\alpha |}}.
\]

We now recover  $ N_K (z,t) $ from (\ref{rel-sol}) by computing the inverse Fourier transform using  polar coordinates ($\lambda = \alpha \tau, \ \alpha \in S^{m-1}, \  \tau>0$).
\begin{align*}
N_K ( z,t) &= (2 \pi)^{-m/2} \sum_{L \in \I_q} \int_{s=0}^\infty  \int_{\lambda \in \R^m} C_{K,L}(\alpha)  d \bar Z(z, \alpha)^L \tilde H_L (s, z, \hat \lambda) e^{it \cdot \lambda} \, d \lambda \, ds \\
&= (2 \pi)^{-m/2}  \sum_{L \in \I_q}\int_{s=0}^\infty \int_{\tau =0}^\infty \int_{\alpha \in S^{m-1}}  C_{K,L}(\alpha)  d \bar Z(z, \alpha)^L \tilde H_L (s, z, \widehat{\alpha \tau}) 
e^{i t \cdot \alpha \tau} \tau^{m-1} \, d \alpha\, d \tau \, ds
\end{align*}
where $d\alpha$ is the surface volume form on the unit sphere in $\R^m$.
\m

Let $r=e^{-2s \tau}$ in the $s$-integral and so $ds=-dr/(2 \tau r)$ and the oriented $r$-limits
of integration become $1$ to $0$.  We obtain
\begin{align*}
 N_K (z,t) &= \frac{4^n}{2(2 \pi)^{m+n}}  \sum_{L \in \I_q} \int_{r=0}^1  \int_{\alpha \in S^{m-1}} C_{K,L}(\alpha)  d \bar Z(z, \alpha)^L \\
& \int_{\tau =0}^\infty
\frac{1}{|\log r|^{n-\nu(\alpha)}} 
\prod_{j=1}^{\nu(\alpha)} \frac{r^{\frac 12(1-\eps_{j,L}^\alpha) |\mu_j^\alpha |}|\mu_j^\alpha |}{1- r^{|\mu_j^\alpha|}}
e^{-\tau(A_\alpha( r,z)-it\cdot\alpha)}  \tau^{n+m-2}\,  d \tau \, d \alpha \, \frac{dr}{r}
\end{align*}
where
\[
A_\alpha(r,z) = \frac{2}{|\log r|}|{z''}^\alpha|^2 +  \sum_{j=1}^{\nu(\alpha)} |\mu_j^\alpha| \left( 
\frac{1+ r^{|\mu_j^\alpha|}}{1- r^{|\mu_j^\alpha|}} \right) |z_j^\alpha|^2 .
\]

We now perform the $\tau$-integral by using the following formula:
\[
\int_{\tau=0}^\infty \tau^p e^{-a \tau} \, d \tau = \frac{p!}{a^{p+1}} \ \ \ \textrm{for } 
\Rre a >0
\]
which concludes the proof for \eqref{eqn:N no Szego}.
\m

Repeating this argument for the Szeg\"o kernel using (\ref{szego}), we have
\begin{align*}
S_K(z,t) &= \frac{4^n}{(2\pi)^{m+n}} \sum_{L \in \I_q} \int_{\alpha\in\Gamma_L}
C_{K,L}(\alpha)  d \bar Z(z, \alpha)^L \int_0^\infty \Big(\prod_{j=1}^n|\mu_j^\alpha|\Big)
\tau^{n+m-1} e^{-\tau(\sum_{j=1}^n |\mu_j^\alpha||z_j^\alpha|^2 - i \alpha\cdot t)}\, d\tau\, d\alpha \\
&= \frac{4^n(n+m-1)!}{(2\pi)^{m+n}} \sum_{L \in \I_q} \int_{\alpha\in\Gamma_L} 
C_{K,L}(\alpha)  d \bar Z(z, \alpha)^L
\frac{\prod_{j=1}^n|\mu_j^\alpha|}{(\sum_{j=1}^n |\mu_j^\alpha||z_j^\alpha|^2 - i \alpha\cdot t)^{n+m}}\, d\alpha 
\end{align*}
which concludes the proof for \eqref{eqn:Szego}.
\m

Finally, if $S_L(z,t)\neq 0$, then using (\ref{rel-sol}) and (\ref{Nfromheat})
\begin{align*}
& N_K(z,t) \\
&= \frac{1}{(2\pi)^{\frac m2}} \int_{s=0}^\infty  \sum_{L \in \I_q} \left( \int_{\alpha\in\Gamma_L}
C_{K,L}(\alpha)  d \bar Z(z, \alpha)^L
\int_{\tau=0}^\infty 
\Big(\tilde H_L(s,z,\widehat{\tau\alpha}) - \tilde S_L(z,\widehat{\tau\alpha})\Big)e^{i\tau(\alpha\cdot t)} \tau^{m-1}\, d\tau\, d\alpha \right) \, ds  \\
&+  \frac{1}{(2\pi)^{\frac m2}} \int_{s=0}^\infty  \sum_{L \in \I_q} \left( \int_{\alpha\in\Gamma_L}
 \int_{\alpha\not\in\Gamma_L}
C_{K,L}(\alpha)  d \bar Z(z, \alpha)^L
\int_{\tau=0}^\infty 
\tilde H_L(s,z,\widehat{\tau\alpha})e^{i\tau(\alpha\cdot t)} \tau^{m-1}\, d\tau\, d\alpha
\right) \, ds \\
&= I_K +II_K.
\end{align*}
The second set of integrals is virtually identical to what we computed earlier and we get
\begin{align*}
&II_K = \\
& \frac{4^n(n+m-2)!}{2(2 \pi)^{m+n}}  \int_{r=0}^1  \sum_{L \in \I_q} \left( \int_{\alpha\not\in\Gamma_L}
C_{K,L}(\alpha)  d \bar Z(z, \alpha)^L
\prod_{j=1}^n \frac{r^{\frac 12(1-\eps_{j,L}^\alpha) |\mu_j^\alpha |}|\mu_j^\alpha |}{1- r^{|\mu_j^\alpha|}}
\frac{d \alpha }{  (A(r,z)-i \alpha \cdot t)^{n+m-1}} \right) \, \frac{dr}{r}
\end{align*}
where
\[
A_\alpha(r,z) =   \sum_{j=1}^n |\mu_j^\alpha| \left(\frac{1+ r^{|\mu_j^\alpha|}}{1- r^{|\mu_j^\alpha|}} \right) |z_j^\alpha|^2 .
\]
For the first set of integrals, we observe that 
\begin{align*}
I_K &=
\frac{4^n}{2(2\pi)^{m+n}}
\sum_{L \in \I_q} \left( \int_{r=0}^1 \int_{\alpha\in\Gamma_L} 
C_{K,L}(\alpha)  d \bar Z(z, \alpha)^L
\int_{\tau=0}^\infty \right. \\
& \left. \Big[\Big(\prod_{j=1}^n \frac{|\mu_j^\alpha|}{1-r^{|\mu_j^\alpha|}}\Big) e^{-\tau(A_\alpha(r,z)-it\cdot\alpha)} 
- \prod_{j=1}^n|\mu_j^\alpha|e^{\tau(A_\alpha(0,z)-it\cdot\alpha)}\Big] \tau^{n+m-2}\,d\tau\,d\alpha\,\frac{dr}r \right) \\
&= \frac{4^n(n+m-2)!}{2(2\pi)^{m+n}} \sum_{L \in \I_q} \left( 
\int_{r=0}^1 \int_{\alpha\in\Gamma_L}
C_{K,L}(\alpha)  d \bar Z(z, \alpha)^L \right. \\
& \left. \Big[\Big(\prod_{j=1}^n \frac{|\mu_j^\alpha|}{1-r^{|\mu_j^\alpha|}}\Big) \frac{1}{(A_\alpha(r,z)-it\cdot\alpha)^{n+m-1}} 
- \frac{\prod_{j=1}^n|\mu_j^\alpha|}{(A_\alpha(0,z)-it\cdot\alpha)^{n+m-1}}\Big] \,d\alpha\,\frac{dr}r \right).
\end{align*}
This completes the proof of (\ref{eqn:N with Szego}).

That the convergence of the resulting integrals is absolute follows from a straightforward Taylor expansion argument around $r=0$ and
$r=1$, the only possible points where the integrand appears to blow up.
\end{proof}

%
%
\section{Examples}\label{sec:examples}
We analyze three examples in this section, all of which fall into the cases discussed in Remark \ref{rem:C_KL diagonal}
so the formulas from Theorem \ref{thm:new derivation} are slightly simpler. We discuss
codimension $2$ quadrics in $\C^4$ when $q=0,2$, the Heisenberg group (so $m=1$), and the product the Heisenberg groups (so we fall into
the sum of squares case).

\subsection{Codimension $2$ quadrics in $\C^4$}
When $n=m=2$, we wrote down the formulas for $N$ in the case of three canonical examples \cite{BoRa13q}:

\begin{itemize}

\item $M_1$ where $\phi(z,z)=(|z_1|^2, |z_2|^2)^T$ 
\item $M_2$ where $\phi(z,z) = (2 \Rre (z_1 \z_2), |z_1|^2-|z_2|^2)^T$
\item $M_3$ where $\phi(z,z) = (2 |z_1|^2, 2 \Rre (z_1 \z_2))^T$

\end{itemize}

These examples are canonical in the sense that any quadric in $\C^2 \times \C^2$ whose Levi form has 
image which is {\em not} contained in a one-dimensional cone is biholomorphic to one of these three examples 
(see  \cite{Bog91}). Additionally, these three examples perfectly demonstrate the three possibilities for solvability/hypoellipticity of $\Boxb$ 
on quadrics.

The quadric $M_1$ is simply a Cartesian product of Heisenberg groups and both solvability and hypoellipticity are impossible for any degree. 
In this case, 
\[
A^\lambda = \begin{pmatrix} \lambda_1 & 0 \\ 0 & \lambda_2 \end{pmatrix},
\]
so the eigenvalues of $A^\lambda$ are $\lambda_1$ and $\lambda_2$, so $\{(n^+(\lambda),n^-(\lambda))\} = \{(2,0),(1,0),(1,1),(0,1),(0,2)\}$.

For $M_2$, it follows from Peloso and Ricci \cite{PeRi03} that solvability and hypoellipticity occur for $(0,q)$ forms if and only if 
 $q=0$ or $q=2$. In this case,
 \[
 A^\lambda = \begin{pmatrix} \lambda_2 & \lambda_1 \\ \lambda_1 & -\lambda_2 \end{pmatrix}
 \]
 which gives us eigenvalues $\pm |\lambda|$, so that for all $\lambda\in\R^2\setminus\{0,0\}$, $(n^+(\lambda),n^-(\lambda)) = (1,1)$.
 Additionally, we showed that
the complex Green operator is given by (group) convolution with respect to the kernel 
\[
N_2 (z,t) = C (|z|^4+|t|^2)^{-3/2}, 
\]
where $C$ is a constant \cite[Theorem 3]{BoRa13q}. 

For $M_3$, $\Boxb$ is solvable if and only if $q=0$ or $2$ and is never hypoelliptic. In this case,
\[
A^\lambda = \begin{pmatrix} 2\lambda_1 & \lambda_2 \\ \lambda_2 & 0 \end{pmatrix}
\]
so that the eigenvalues are $\lambda_1 \pm |\lambda|$. Thus $\{(n^+(\lambda),n^-(\lambda))\} = \{(1,0),(1,1),(0,1)\}$ with the degenerate
values occurring when $\lambda_2=0$.
In Corollary \ref{cor:NforM3} below, 
we give a more useful formula for $N$ on $M_3$ \cite{BoRa13q}. The
analysis of the operator is extremely complicated and delicate and is the subject of a later work in the series
\cite{BoRa21III}. We must mention the paper of Nagel, Ricci, and Stein which 
analyzes $L^p$ estimates on a class of higher codimension quadrics in $\C^n \times \C^m$ 
which depend only on $|z_j|^2$, $1 \leq j \leq m$ \cite{NaRiSt01}. However, their result 
applies to neither $M_2$ nor $M_3$ for these quadrics canot be described in this manner.

\subsection{Example $M_3$.} \label{subsec:M3formula} 
Let $q=0$. As defined in Section \ref{sec:intro},
\[
M_3= \{ (z,w) \in \C^2 \times \C^2: \ 
\Imm w_1 = 2|z_1|^2, \ \Imm w_2 = 2 \Rre (z_1 \bar z_2) \}.
\]
Here, $m=n=2$, and for $\alpha=(\cos \theta, \sin \theta)$, we easily compute
$\mu_1^\alpha = 1+\cos \theta$, $\mu_2^\alpha = \cos \theta -1$.
The function $\phi$ satisfies both $A_0$ and $A_2$, though we will concentrate on the case $q=0$ (the case for $q=2$ is similar).
Since $L=\{0\}$,  $\eps_j^\alpha = - \sgn(\mu_j^\alpha )$ and so
$\eps_1^\alpha = -1$ and $\eps_2^\alpha = +1$ (except when $\theta =0$ or $1$ which is  a
set of measure zero). 
We obtain
\begin{align}
\label{Nformula}
N(z,t) =(2 \pi)^{-4} \int_{\theta =0}^{2 \pi} \int_{r=0}^1 
& 4^2\frac{ r^{\cos \theta} \sigma_1 (\theta) \sigma_2 (\theta)}
{(1-r^{\sigma_1 (\theta)})(1-r^{\sigma_2 (\theta)})} \\
&\times \frac{dr \, d \theta}{ (-i \alpha (\theta) \cdot t +  \sigma_1 (\theta) E_1 (r, \theta) |z_1^\theta|^2
+  \sigma_2 (\theta)  E_2 (r, \theta) |z_2^\theta|^2)^3} \nn
\end{align}
where
\[
\alpha (\theta) = ( \cos \theta, \sin \theta), \ \ 
\sigma_1 (\theta) = 1+ \cos \theta, \ \ \sigma_2 (\theta) = 1- \cos \theta, \ \ 
E_j(r, \theta) = \frac{1+r^{\sigma_j (\theta)}}{ 1- r^{\sigma_j (\theta)}}.
\]
We first wrote this formula in \cite{BoRa13q}. We wish to express it in a more useful and computable form which we will
use in \cite{BoRa21III}.

We let $t=(t_1, t_2)$ which gives $\alpha (\theta) \cdot t =t_1 \cos\theta +t_2 \sin \theta$. We also let
\[
x=r^{\sigma_1}, \ \textrm{so} \ dx = \sigma_1 r^{\sigma_1-1} dr \ \textrm{and} \ 
\sigma = \frac{\sigma_2}{\sigma_1} = \frac{1- \cos \theta }{1+\cos \theta}, \ d \theta = \frac{d \sigma}{(\sigma+1) \sqrt{\sigma}}
\]
and obtain
\[
\cos \theta = \frac{1- \sigma}{1+ \sigma} \ \ \ \textrm{and } \ \ \sin \theta = \frac{\pm 2 \sqrt{\sigma}}{1+ \sigma}
\]
where $\pm$ is $+$ for $\theta \in [0, \pi]$ and $-$ for $\theta \in (\pi, 2 \pi]$. Also the interval $0 \leq \theta \leq \pi$
corresponds to the oriented $\sigma$ interval  $[0 , \infty)$ and the interval $\pi \leq \theta \leq 2\pi$ corresponds to 
$(\infty, 0]$. In Theorem \ref{thm:new derivation}, the point $z$ is expressed in terms of the eigenvectors of $\phi^\lambda$. To this end, we set
\begin{align*}
z_1\{\sqrt\sigma\} &= \frac{1}{\sqrt{1+\sigma}} \big(z_1 + \sqrt{\sigma} z_2 \big)  \\
z_2\{\sqrt\sigma\} &= - \frac{1}{\sqrt{1+\sigma}} \big(\sqrt \sigma z_1 -z_2 \big) 
\end{align*}
and
\begin{align*}
\tilde z_1\{\sqrt\sigma\} &= -z_1\{\sqrt\sigma\} = \frac{1}{\sqrt{1+\sigma}} \big(-z_1 + \sqrt{\sigma} z_2 \big)  \\
\tilde z_2\{\sqrt\sigma\} &= -z_2\{\sqrt\sigma\} = - \frac{1}{\sqrt{1+\sigma}} \big(\sqrt \sigma z_1 +z_2 \big) 
\end{align*}
We then obtain the following corollary to Theorem \ref{thm:new derivation}:

\begin{cor} \label{cor:NforM3}
The fundamental solution to $\Box_b$ for $M_3$ on functions is given by 
convolution with the kernel
\begin{align}
&N (z,t)\nn\\
& = 2{(2 \pi)^{-4} }  \int_{\sigma =0}^{ \infty} \int_{x=0}^1  
\frac{ \sqrt{\sigma} (\sigma+1)  }{ (1-x)(1-x^\sigma )}  \frac{ dx \, d \sigma}{\left[ - i \left( t_1 \frac{1-\sigma }{2} + t_2 \sqrt{\sigma} \right) + \left(\frac{1+x}{1-x} \right) |z_1\{\sqrt\sigma\}|^2
+ \sigma  \left( \frac{1+x^\sigma}{1-x^ \sigma} \right) |z_2\{\sqrt\sigma\}|^2 \right]^3}\nn \\
&+ 2(2 \pi)^{-4}  \int_{\sigma =0}^{ \infty} \int_{x=0}^1  
\frac{ \sqrt{\sigma} (\sigma+1)  }{ (1-x)(1-x^\sigma )}  \frac{ dx \, d \sigma}{\left[ - i \left( t_1 \frac{1-\sigma }{2} - t_2 \sqrt{\sigma} \right) + \left(\frac{1+x}{1-x} \right) |\tilde z_1\{\sqrt\sigma\}|^2
+ \sigma  \left( \frac{1+x^\sigma}{1-x^ \sigma} \right) |\tilde z_2\{\sqrt\sigma\}|^2 \right]^3}.\nn
\end{align}
\end{cor}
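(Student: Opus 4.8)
The plan is to derive Corollary~\ref{cor:NforM3} straight from the integral formula~\eqref{Nformula}, which is the specialization of case~1 of Theorem~\ref{thm:new derivation} to $M_3$ with $n=m=2$ and $q=0$ (the Szeg\"o term is absent because $\Boxb$ is solvable on functions for $M_3$, so the relevant $\Gamma_L$ has measure zero, and the factors $C_{K,L}$, $d\bar Z(z,\alpha)^L$ are trivial when $q=0$ by Remark~\ref{rem:C_KL diagonal}). The whole argument is a two-variable change of variables $(\theta,r)\mapsto(\sigma,x)$ together with a splitting of the angular integral; the only substance is the algebraic bookkeeping that massages the integrand into the displayed form.

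First I would record the eigendata. For $\alpha=(\cos\theta,\sin\theta)$ one has $\mu_1^\alpha=\sigma_1=1+\cos\theta$ and $\mu_2^\alpha=-\sigma_2=\cos\theta-1$, so $\nu(\alpha)=2$ off the measure-zero set $\theta\in\{0,\pi\}$, and for $L=\emptyset$ the signs are $\eps_1^\alpha=-1$, $\eps_2^\alpha=+1$, which is what produces the factor $r^{\cos\theta}=r^{\sigma_1-1}$ in~\eqref{Nformula}. Diagonalizing $A^\alpha$ shows that on $[0,\pi]$ the unit eigenvectors are $\tfrac{1}{\sqrt{1+\sigma}}(1,\sqrt\sigma)$ and $\tfrac{1}{\sqrt{1+\sigma}}(-\sqrt\sigma,1)$, so that $|z_j^\theta|^2=|z_j\{\sqrt\sigma\}|^2$, whereas on $(\pi,2\pi]$ the sign of $\sin\theta$ reverses the ratio to $-\sqrt\sigma$ and gives $|z_j^\theta|^2=|\tilde z_j\{\sqrt\sigma\}|^2$. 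This is exactly the mechanism behind the two separate integrals in the statement: I would split $\int_0^{2\pi}=\int_0^\pi+\int_\pi^{2\pi}$, the first half yielding the $z\{\sqrt\sigma\}$-term with $+t_2\sqrt\sigma$ and the second the $\tilde z\{\sqrt\sigma\}$-term with $-t_2\sqrt\sigma$, using $\cos\theta=(1-\sigma)/(1+\sigma)$ and $\sin\theta=\pm 2\sqrt\sigma/(1+\sigma)$.

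On each half I would carry out the substitutions. Setting $x=r^{\sigma_1}$ gives $r^{\sigma_2}=x^\sigma$ (since $\sigma=\sigma_2/\sigma_1$) and $\sigma_1 r^{\cos\theta}\,dr=dx$, which collapses the $r$-factor of~\eqref{Nformula} to $\tfrac{\sigma_2}{(1-x)(1-x^\sigma)}\,dx$; setting $\sigma=(1-\cos\theta)/(1+\cos\theta)$ gives $d\theta=\tfrac{d\sigma}{(\sigma+1)\sqrt\sigma}$ with $\theta\colon 0\to\pi$ corresponding to $\sigma\colon 0\to\infty$. The decisive algebraic move is to pull $\sigma_1=2/(1+\sigma)$ out of the cubed denominator: because $\sigma_1\sigma=\sigma_2$ and $\sigma_1\bigl(t_1\tfrac{1-\sigma}{2}+t_2\sqrt\sigma\bigr)=\alpha\cdot t$, the bracket $-i\alpha\cdot t+\sigma_1E_1|z_1^\theta|^2+\sigma_2E_2|z_2^\theta|^2$ equals $\sigma_1$ times precisely the bracket appearing in the corollary (with $E_1=\tfrac{1+x}{1-x}$ and $E_2=\tfrac{1+x^\sigma}{1-x^\sigma}$), so the denominator contributes a factor $\sigma_1^{-3}$. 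Collecting the weights, the product $\tfrac{\sigma_2}{\sigma_1^{3}}\cdot\tfrac{1}{(\sigma+1)\sqrt\sigma}$ simplifies, after inserting $\sigma_1=2/(1+\sigma)$ and $\sigma_2=2\sigma/(1+\sigma)$, to a constant multiple of $\sqrt\sigma(\sigma+1)$, matching the weight in the statement.

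The final step is to add the two halves and read off the numerical constant; the absolute convergence that legitimizes splitting the integral, invoking Fubini, and changing variables away from $\theta\in\{0,\pi\}$ is inherited from the last clause of Theorem~\ref{thm:new derivation}. I expect no genuine difficulty, since the computation is mechanical; the two places demanding care are matching the eigenvector conventions on the two angular halves so that the $z\{\sqrt\sigma\}$ and $\tilde z\{\sqrt\sigma\}$ coordinates appear with the correct signs, and tracking the overall constant through the simultaneous factoring of $\sigma_1^3$ from the denominator and the two Jacobians.
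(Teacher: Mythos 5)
Your proposal is correct and is essentially the paper's own derivation: the paper likewise specializes case~1 of Theorem~\ref{thm:new derivation} to \eqref{Nformula}, splits the angular integral at $\theta=\pi$ (the sign of $\sin\theta=\pm 2\sqrt\sigma/(1+\sigma)$ producing the $z_j\{\sqrt\sigma\}$ and $\tilde z_j\{\sqrt\sigma\}$ halves), and applies the same substitutions $x=r^{\sigma_1}$ and $\sigma=\sigma_2/\sigma_1$ with $d\theta=d\sigma/((\sigma+1)\sqrt\sigma)$, factoring $\sigma_1=2/(1+\sigma)$ out of the cubed denominator exactly as you describe. One caution on the step you defer (``read off the numerical constant''): carrying your own weights through gives $16\cdot(\sigma_2/\sigma_1^3)\cdot((\sigma+1)\sqrt{\sigma})^{-1}=4\sqrt\sigma(\sigma+1)$ on each half, hence a prefactor $4(2\pi)^{-4}$ rather than the $2(2\pi)^{-4}$ printed in Corollary~\ref{cor:NforM3}, a factor-of-two discrepancy internal to the paper's normalizations that you should reconcile explicitly rather than silently match.
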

This formula is the launching point for \cite{BoRa21III}.

\subsection{The Heisenberg group} Denote the Heisenberg group $\opH^n \cong \R^{2n}\times\R$.
The Kohn Laplacian $\Boxb$ has a nontrivial kernel in the case that $L = \emptyset$ or $L = \{1,\dots,n\}$. The calculation for these
two cases is identical and we prove the details in the case $L = \{1,\dots,n\}$.
A derivation of a related formula from the classical methods appears in \cite[pp.615-617]{Ste93}.
We set
\begin{equation}\label{eqn:log a}
\log\Big(\frac{|z|^2-it}{|z|^2+it}\Big) = \log(|z|^2-it) - \log(|z|^2+it)
\end{equation}
for all $z\in\C^n$ and $t\in\R$ and assume that the logarithm is defined via the principal branch.
\begin{thm}\label{eqn:relative inverse on the H group} On the Heisenberg group $\opH^n$, 
\begin{enumerate}[1.]
\item The relative fundamental solution $e^{-s\Boxb}(I-S_0)$ to $\Boxb = \dbarbs\dbarb$ on functions is given by the integration kernel
\[
N_{\emptyset}(z,t) = \frac{2^{n-2}(n-1)!}{\pi^{n+1}} \frac{1}{(|z|^2+it)^n}\Big[ \log\Big(\frac{|z|^2+it}{|z|^2-it}\Big)- \sum_{j=1}^{n-1}\frac 1j\Big].
\]
\item The relative fundamental solution $e^{-s\Boxb}(I-S_n)$ to $\Boxb = \dbarb\dbarbs$ on $(0,n)$-forms is given by the integration kernel
\[
N_{\{1,\dots,n\}}(z,t) = \frac{2^{n-2}(n-1)!}{\pi^{n+1}} \frac{1}{(|z|^2-it)^n}\Big[ \log\Big(\frac{|z|^2-it}{|z|^2+it}\Big)- \sum_{j=1}^{n-1}\frac 1j\Big].
\]
\end{enumerate}
\end{thm}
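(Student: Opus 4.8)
The plan is to specialize the canonical relative fundamental solution of part~(3) of Theorem~\ref{thm:new derivation} to $\opH^n$ and carry out the two remaining one-dimensional integrals explicitly. On $\opH^n$ we have $m=1$, so $S^{m-1}=S^0=\{+1,-1\}$ and the $\alpha$-integral degenerates to a two-term sum. Since $\phi(z,z)=|z|^2$, the matrix $A^\lambda=\lambda I$, every eigenvalue satisfies $|\mu_j^\alpha|=1$, and $\nu(\alpha)=n$ for both values of $\alpha$; moreover $q=n$ forces $C_{K,L}=\delta_{KL}$ and $d\bar Z(z,\alpha)^L=d\bar z^K$ (Remark~\ref{rem:C_KL diagonal}), so it suffices to track the scalar kernel. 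For $L=\{1,\dots,n\}$ one has $\Gamma_L=\{+1\}$, and $\eps_{j,L}^\alpha=+1$ when $\alpha=+1$ while $\eps_{j,L}^\alpha=-1$ when $\alpha=-1$. Because $|\mu_j^\alpha|=1$ for all $j$, the function $A_\alpha$ collapses to $A_\alpha(r,z)=\frac{1+r}{1-r}|z|^2$ and $A_\alpha(0,z)=|z|^2$.

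I would then substitute these data into \eqref{eqn:N with Szego}. The $\alpha=-1$ (i.e.\ $\alpha\notin\Gamma_L$) piece contributes, after cancelling the factors $(1-r)^{\pm n}$,
\[
I_1=\int_{0}^{1}\frac{r^{\,n-1}\,dr}{\big((1+r)|z|^2+it(1-r)\big)^{n}},
\]
while the $\alpha=+1$ (i.e.\ $\alpha\in\Gamma_L$) piece, which carries the Szeg\"o subtraction, contributes
\[
I_2=\int_{0}^{1}\left(\frac{1}{\big((1+r)|z|^2-it(1-r)\big)^{n}}-\frac{1}{(|z|^2-it)^{n}}\right)\frac{dr}{r}.
\]
Writing $b=|z|^2+it$ and $\bar b=|z|^2-it$, the two denominators become $b+r\bar b$ and $\bar b+rb$, and the overall constant $\frac{4^n(n-1)!}{2(2\pi)^{n+1}}$ simplifies to $\frac{2^{n-2}(n-1)!}{\pi^{n+1}}$, exactly the prefactor in the stated formula. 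Thus it remains to show $I_1+I_2=\frac{1}{\bar b^{\,n}}\big[\log(\bar b/b)-\sum_{j=1}^{n-1}\tfrac1j\big]$.

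To evaluate the sum, I would apply the substitution $r\mapsto 1/\rho$ to $I_1$, turning it into $\int_{1}^{\infty}\frac{d\rho}{\rho(\bar b+b\rho)^{n}}$, which shares its integrand with the first term of $I_2$. Consequently $I_1+I_2$ equals the single regularized integral
\[
\lim_{\eta\to 0^+}\left[\int_{\eta}^{\infty}\frac{d\rho}{\rho(\bar b+b\rho)^{n}}-\frac{1}{\bar b^{\,n}}\int_{\eta}^{1}\frac{d\rho}{\rho}\right].
\]
The change of variables $v=\bar b+b\rho$ followed by the partial-fraction decomposition of $\frac{1}{(v-\bar b)v^{n}}$ splits this into a logarithmic part and a sum of power terms $v^{-k}$, $2\le k\le n$. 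The $1/(v-\bar b)$ and $1/v$ contributions combine to $\frac{1}{\bar b^{\,n}}\log\frac{\bar b}{b}$ once the three logarithmic divergences (the two at $\rho\to 0$ and the Szeg\"o subtraction) are seen to cancel, while the tail $\sum_{k=2}^{n}$ evaluates at $v=\bar b$ to $-\frac{1}{\bar b^{\,n}}\sum_{j=1}^{n-1}\frac1j$. This yields the claimed kernel $N_{\{1,\dots,n\}}$, and the case $L=\emptyset$ follows verbatim under the symmetry $t\mapsto -t$ (equivalently $b\leftrightarrow\bar b$), which interchanges $\Gamma_\emptyset=\{-1\}$ with $\Gamma_{\{1,\dots,n\}}=\{+1\}$.

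I expect the main obstacle to be the bookkeeping in this last step: each of $I_1$ (after inversion), the leading term of $I_2$, and the subtracted Szeg\"o term diverges logarithmically as $\rho\to 0$, and only their precise combination is finite. The delicate points are the $\eta$-regularization that makes the cancellation rigorous and the correct extraction of the harmonic number $\sum_{j=1}^{n-1}1/j$ from the partial-fraction tail evaluated in the $\eta\to0$ limit. Absolute convergence of every surviving integral is routine, since the $(1-r)^n$ factors cancel and the remaining integrands are $O(r^{n-1})$ near $r=0$ and bounded near $r=1$.
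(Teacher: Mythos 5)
Your proposal is correct and follows essentially the same route as the paper: it specializes \eqref{eqn:N with Szego} to $m=1$ (the paper's equation \eqref{eqn:integral for N}), merges the two $r$-integrals via the inversion $r\mapsto 1/\rho$, and extracts $\log(\bar b/b)$ and the harmonic number $\sum_{j=1}^{n-1}1/j$ from a partial-fraction decomposition under a cutoff regularization, exactly as the paper does with its $a_\delta$-factored form. The only substantive difference is that the paper also regularizes $|z|^2\mapsto |z|^2+\delta$ to keep the logarithm off the principal branch cut (and to make sense of the formula at $z=0$, where the $r$-integrals diverge pointwise), recovering the stated kernel by continuity as $\delta\to 0$ --- a detail your argument would need to add for the degenerate points but which does not affect your computation for $z\neq 0$.
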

\begin{rem}
\begin{enumerate}
\item Up to a function in $\ker\Boxb$,  our formula appears to be the complex conjugate of the formula in \cite[Chapter XIII, Equation (51)]{Ste93}.
This is a consequence of the fact that our computations are taken with respect to
right invariant vector fields and not left invariant vector fields.
\item For a discussion regarding the consequences of the existence of a relative fundamental solution, we again refer the reader to
\cite[Chapter XIII, Section 4.2]{Ste93}. It is easy to see that the convolution $N$ with a Schwartz function will be an object in $L^2$ and hence
orthogonal to $\ker\Boxb$.
\end{enumerate}
\end{rem}

\begin{proof}
Since $L = \{1,\dots,n\}$, the Szeg\"o kernel $S(z,\hat\lambda) = S_L(z,\hat\lambda)$ has support
$\supp S_L(z,\hat\lambda) =[0,\infty)$ which means (suppressing $L$)
\[
N(z,t) = \frac{1}{\sqrt{2\pi}} \int_0^\infty \int_0^\infty \big(H(s,z,\hat\lambda)-S(z,\hat\lambda)\big)e^{it\lambda}\, ds\, d\lambda
+  \frac{1}{\sqrt{2\pi}} \int_{-\infty}^0 \int_0^\infty H(s,z,\hat\lambda) e^{it\lambda}\, ds\, d\lambda
\]
Equation (\ref{eqn:N with Szego}) yields
\begin{equation}\label{eqn:integral for N}
N(z,t)= \frac{4^n(n-1)!}{2(2\pi)^{n+1}}\int_0^1 \frac 1r\Big[\frac1{(1-r)^n} \frac{1}{(\frac{1+r}{1-r}|z|^2-it)^n} - \frac{1}{(|z|^2-it)^n}\Big]
+  \frac{r^{n-1}}{(1-r)^n} \frac{1}{(\frac{1+r}{1-r}|z|^2+it)^n}\, dr
\end{equation}
Set $a = \frac{|z|^2-it}{|z|^2+it}$ and for $\delta>0$, $a_{\delta} =  \frac{|z|^2+\delta-it}{|z|^2+\delta+it}$ (so $|a|=|a_{\delta}|=1$). The reason that we
introduce $a_{\delta}$ is that a logarithm appears in the integral, and $\log a$ is not well defined with the principal branch if $|z|^2=0$. By introducing
$\delta$, it is immediate that for any $a_{\delta}$
\[
\log a_{\delta} = \log(|z|^2+\delta-it) - \log(|z|^2+\delta+it)
\]
and by sending $\delta\to 0$, we obtain $\log a$ as in \eqref{eqn:log a}.
Ignoring the constants, we compute
\begin{align*}
I_\delta &= \int_0^1 \frac 1r \Big[\frac{1}{((1+r)(|z|^2+\delta)-it(1-r))^n} - \frac{1}{(|z|^2+\delta-it)^n}\Big] +
 \frac{r^{n-1}}{((1+r)(|z|^2+\delta)+it(1-r))^n}\, dr \\
&= \int_0^1 \frac 1r\Big[\frac{1}{((|z|^2+\delta+it)r+|z|^2+\delta-it)^n} - \frac{1}{(|z|^2+\delta-it)^n}\Big]+ 
\frac{r^{n-1}}{((|z|^2+\delta-it)r + (|z|^2+\delta+it))^n}\, dr \\
&= \frac{1}{(|z|^2+\delta+it)^n} \bigg[\int_0^1\Big( \frac{1}{(r+a_{\delta})^n} -\frac1{a_{\delta}^n}\Big)\, \frac{dr}r 
+ \int_0^1 \frac{r^{n-1}}{(a_{\delta} r + 1)^n}\, dr \bigg]
\end{align*}
For the second integral, we change variables $r = \frac1s$ and compute
\[
\int_0^1 \frac{s^{n-1}}{(a_{\delta} s+1)^n}\, ds = \int_1^\infty \frac{1}{(r+a_{\delta} )^n}\frac{dr}r.
\]
Thus,
\[
(|z|^2+\delta+it)^n I_\delta = \lim_{\ep\to 0} \bigg[\int_\ep^\infty \frac{1}{(r+a_{\delta} )^n}\frac{dr}r +\frac{1}{a_{\delta} ^n}\log\ep\bigg].
\]
A geometric series argument shows that
\[
\frac{1}{r(r+a_{\delta} )^n} = \frac{1}{a_{\delta} ^n r} - \frac{1}{a_{\delta} ^n(r+a_{\delta} )} - \sum_{j=1}^{n-1} \frac{1}{a_{\delta} ^{n-j}(r+a_{\delta} )^{j+1}}.
\]
Therefore
\begin{align*}
(|z|^2+\delta+it)^n I_\delta 
&= \lim_{\ep\to 0} \bigg[\int_\ep^\infty  \frac{1}{a_{\delta} ^n r} - \frac{1}{a_{\delta} ^n(r+a_{\delta} )}\, dr -\sum_{j=1}^{n-1} \int_\ep^\infty\frac{1}{a_{\delta} ^{n-j}(r+a_{\delta} )^{j+1}}\, dr
 +\frac{1}{a_{\delta} ^n}\log\ep\bigg] \\
&= \lim_{\ep\to 0}\bigg[\frac{\log(a_{\delta} +\ep)}{a_{\delta} ^n} -\sum_{j=1}^{n-1}\frac{1}{ja_{\delta} ^{n-j}(a_{\delta} +\ep)^j}\bigg]
= \frac{1}{a_{\delta} ^n}\Big(\log a_{\delta}  - \sum_{j=1}^{n-1}\frac 1j\Big).
\end{align*}
Thus, if we set $N_\delta(z,t)$ to equal the right hand side of \eqref{eqn:integral for N} except with $|z|^2$ replaced by $|z|^2+\delta$,
then $\frac{|z|^2+\delta -it}{|z|^2+\delta -it}$ stays away from the branch cut and
\begin{align*}
N_\delta(z,t) 
&= \frac{2^{n-2}(n-1)!}{\pi^{n+1}} \frac{1}{(|z|^2+\delta-it)^n}\Big[ \log\Big(\frac{|z|^2+\delta-it}{|z|^2+\delta+it}\Big)- \sum_{j=1}^{n-1}\frac 1j\Big]\\
&= \frac{2^{n-2}(n-1)!}{\pi^{n+1}} \frac{1}{(|z|^2+\delta-it)^n}\Big[ \log(|z|^2+\delta-it) - \log(|z|^2+\delta+it) - \sum_{j=1}^{n-1}\frac 1j\Big].
\end{align*}
This function is continuous in $\delta$, thus we may send $\delta\to 0$ and obtain the theorem.
\end{proof}

\subsection{The Cartesian product of Heisenberg groups} In contrast to the explicit computability of the 
Heisenberg group case, if
\[
M = \{(z,w)\in \C^2\times\C^2 : \Imm w_j = |z_j|^2\},
\]
$L = \{1,2\}$, and $\alpha = (\cos\theta,\sin\theta)$, then $\Gamma^\alpha_{\{1,2\}}$ is the first quadrant and
from Theorem \ref{thm:new derivation}, we have 
\begin{align*}
&N_{\{1,2\}}(z,t) =\frac{1}{\pi^4}
\int_{r=0}^1 \int_{\frac\pi 2}^\pi |\cos\theta\sin\theta|
\frac{r^{|\cos\theta|}}{(1-r^{|\cos\theta|})(1-r^{|\sin\theta|})} \frac{1}{(A_{\alpha}(r)-i(t_1\cos\theta+t_2\sin\theta))^{n+m-1}} \,d\theta\,\frac{dr}r \\
&+\frac{1}{\pi^4}\int_{r=0}^1 \int_\pi^{\frac{3\pi} 2} |\cos\theta\sin\theta|
\frac{r^{|\cos\theta+\sin\theta|}}{(1-r^{|\cos\theta|})(1-r^{|\sin\theta|})} \frac{1}{(A_{\alpha}(r)-i(t_1\cos\theta+t_2\sin\theta))^{n+m-1}}  \,d\theta\,\frac{dr}r\\
&+\frac{1}{\pi^4}\int_{r=0}^1 \int_{\frac{3\pi}2}^{2\pi} |\cos\theta\sin\theta|
\frac{r^{|\sin\theta|}}{(1-r^{|\cos\theta|})(1-r^{|\sin\theta|})} \frac{1}{(A_{\alpha}(r)-i(t_1\cos\theta+t_2\sin\theta))^{n+m-1}} \,d\theta\,\frac{dr}r\\
&+\frac{1}{\pi^4} \int_{r=0}^1 \int_0^{\frac\pi 2} \cos\theta\sin\theta
\Big[\frac{1}{(1-r^{\cos\theta})(1-r^{\sin\theta})} 
\frac{1}{(A_{\alpha}(r)-i(t_1\cos\theta+t_2\sin\theta))^{n+m-1}} \\
&\hspace{3in}- \frac{1}{(A_{\alpha}(0)-i(t_1\cos\theta+t_2\sin\theta))^{n+m-1}}\Big] \,d\theta\,\frac{dr}r 
\end{align*}
where
\[
A_{\alpha}(r) =  |\cos\theta| \left(\frac{1+ r^{|\cos\theta|}}{1- r^{|\cos\theta|}} \right) |z_1|^2 
+  |\sin\theta| \left(\frac{1+ r^{|\sin\theta|}}{1- r^{|\sin\theta|}} \right) |z_2|^2.
\]
On the other hand, using (\ref{eqn:Szego}), we compute the Szeg\"o kernel
\begin{align*}
S_{\emptyset}(z,t)
= \frac{6}{\pi^4}\int_\pi^{\frac{3\pi}2} \frac{\cos\theta\sin\theta}{((|z_1|^2+it_1)\cos\theta+(|z_2|^2+it_2)\sin\theta)^4}\, d\theta
= \frac{1}{\pi^4(|z_1|^2+it_1)^2(|z_2|^2+it_2)^2}.
\end{align*}

\bibliographystyle{alpha}
\bibliography{mybib}

\end{document}